\documentclass[a4paper,12pt,reqno]{amsart}
\usepackage[a4paper,margin=1in]{geometry}

\usepackage{amsmath,amssymb,amsthm,mathtools,mathrsfs}
\usepackage{enumitem}
\usepackage{microtype}
\usepackage{xcolor}
\usepackage[
    colorlinks=true,
    linkcolor=blue!50!black,
    citecolor=green!40!black,
    urlcolor=blue!60!black
]{hyperref}
\usepackage[nameinlink,noabbrev]{cleveref}
\usepackage{tikz}
\usepackage{xurl}

\hypersetup{
    pdftitle={Length-generating functions for twisted involutions of Coxeter groups},
    pdfauthor={Ronald de Man}
}

\newtheorem{theorem}{Theorem}[section]
\newtheorem{proposition}[theorem]{Proposition}
\newtheorem{lemma}[theorem]{Lemma}
\newtheorem{corollary}[theorem]{Corollary}
\theoremstyle{definition}

\newtheorem{remark}[theorem]{Remark}
\newtheorem{example}[theorem]{Example}

\newcommand{\Inv}{\operatorname{\mathbf{I}}}
\newcommand{\DR}{\operatorname{D}_R}
\newcommand{\dpt}{\operatorname{dp}}
\newcommand{\cc}{\mathcal{C}}
\newcommand{\id}{\operatorname{id}}
\newcommand{\sgn}{\operatorname{sgn}}
\newcommand{\vrt}{\operatorname{Vert}}
\newcommand{\tr}{\operatorname{tr}}
\newcommand{\GL}{\operatorname{GL}}
\newcommand{\Q}{\mathbb Q}
\newcommand{\dm}{\diamond}

\title[Length-generating functions for twisted involutions]
{Length-generating functions for twisted involutions of Coxeter groups}

\author{Ronald de Man}
\email{ronalddeman@gmail.com}

\subjclass[2020]{20F55, 05A15}
\keywords{Coxeter groups, involutions, reflections, root depth, growth series,
Coxeter complex, inclusion--exclusion}

\begin{document}

\begin{abstract}
For a Coxeter system $(W,S)$ of finite rank and an involutive automorphism
$\ast:W\to W$ which preserves $S$, we prove that the length-generating function
$\sum_{z\in\Inv_{W,\ast}}q^{\ell(z)}$ of the set of $\ast$-twisted involutions
is rational. We derive recurrence relations analogous to the well-known
recurrence relations expressing the growth series $W(q):=\sum_{w\in W}q^{\ell(w)}$
in terms of the growth series $W_I(q)$ of the parabolic subgroups of $W$.
Our result extends to any subset $\cc$ of $\ast$-twisted involutions that is
closed under $\ast$-twisted conjugation.
We use these recurrence relations to give an alternative proof of a
power-series identity for twisted involutions due to Lusztig.
\end{abstract}

\maketitle

\section{Introduction}\label{sec:intro}
Let $(W,S)$ be a Coxeter system of finite rank and let $\ast:W\to W$ be an
involutive automorphism preserving $S$. We are interested in counting
the lengths of the elements of the set of $\ast$-twisted involutions
$$
\Inv_{W,\ast}:=\{z\in W:z^\ast=z^{-1}\}
$$
and, more generally, of the subsets $\cc\subseteq\Inv_{W,\ast}$ closed
under $\ast$-twisted conjugation ($x\mapsto x^{-1}zx^{\ast}$).
Examples of such sets are the set of ordinary
involutions $\Inv_{W,\id}$ and the set of reflections
$T_W:=\{w^{-1}sw:w\in W,s\in S\}$.

For a subset $A\subseteq W$, we set $A(q):=\sum_{w\in A}q^{\ell(w)}$.
It is well-known that the length-generating function, or growth series,
$W(q):=\sum_{w\in W}q^{\ell(w)}$ is rational and satisfies recurrence relations
which express $W(q)$ in terms of the growth series $W_I(q)$ of the parabolic
subgroups $W_I$ of $W$.
Our main theorem gives similar recurrence relations for $\cc(q)$.
\begin{theorem}\label{thm:main}
Let $(W,S)$ be a Coxeter system of finite rank and let $\ast$ be an involutive
automorphism of W preserving $S$.
Let $\cc\subseteq\Inv_{W,\ast}$ be closed under twisted
conjugation and let $\cc_I=\cc\cap W_I$ for $I\subseteq S$.
Then $\cc(q)$ is rational. Moreover, if $W$ is infinite,
\begin{equation}\label{eq:main-infinite}
\boxed{\displaystyle
  \sum_{I\subseteq S, I=I^{\ast}}(-1)^{r_{\ast}(I)}
  \frac{\cc_I(q)}{W_I(q^2)}=0,}
\end{equation}
or, equivalently,
\begin{equation}\label{eq:main-infinite-proper}
  \boxed{\displaystyle
  \frac{\cc(q)}{W(q^2)}
  =\sum_{I\subsetneq S, I=I^{\ast}}
  (-1)^{r_{\ast}(S\setminus I)-1}\frac{\cc_I(q)}{W_I(q^2)}.}
\end{equation}
where $r_{\ast}(J)=|J/\langle \ast\rangle|$ denotes the number of orbits of $\ast$
acting on a $\ast$-stable subset $J\subseteq S$.

If $W$ is finite, let $w_0$ be the longest element and $N=\ell(w_0)$. Then
\begin{equation}\label{eq:main-finite}
  \boxed{\displaystyle
  \frac{\cc(q)+(-1)^{r_{\ast}(S)-1}q^{2N}\cc(-q^{-1})}{W(q^2)}
  = \sum_{I\subsetneq S,I=I^{\ast}}(-1)^{r_{\ast}(S\setminus I)-1}\frac{\cc_I(q)}{W_I(q^2)}.}
\end{equation}
\end{theorem}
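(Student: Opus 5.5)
The plan is to imitate the classical proof of the recurrence for $W(q)$, in which $\sum_{I\subseteq S}(-1)^{|I|}W(q)/W_I(q)$ is rewritten as $\sum_{w}q^{\ell(w)}\sum_{J\subseteq S\setminus \DR(w)}(-1)^{|J|}$. That inner sum is an Euler characteristic which vanishes unless $\DR(w)=S$, which is possible only for $w=w_0$. The twisted analogue should replace the factor $W(q)/W_I(q)$ by a factorization of $\cc(q)$ adapted to a $\ast$-stable $I$.

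\textbf{Step 1 (parabolic factorization).} For a $\ast$-stable $I\subseteq S$ I would prove that twisted conjugation by $W_I$ behaves like a free action on a suitable part of $\cc$, with length doubling. Concretely, every $z\in\cc$ should be reachable as $z=u^{-1}z'u^{\ast}$ with $u\in W_I$ and $z'$ satisfying a minimality condition relative to $I$, and with $\ell(z)=\ell(z')+2\ell(u)$ on the "generic" part. This is where the factor $W_I(q^2)$ comes from.

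One point needs care. Since $z^\ast=z^{-1}$, we have $D_L(z)=\DR(z)^\ast$. So a condition such as $\DR(z')\cap I=\emptyset$ automatically gives $D_L(z')\cap I=\emptyset$ when $I=I^\ast$. This is the reason only $\ast$-stable $I$ enter.

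The non-free part is where the twisted action has stabilizers. This happens exactly when $z'$ interacts with a $\ast$-orbit inside $I$, i.e.\ $s z' s^\ast=z'$ with $\ell$ unchanged. I expect to handle it by comparing with the sub-generating function $\cc_I(q)$. The target is an identity of the form
\[
\frac{\cc(q)}{W(q^2)}\ \text{expressed through}\ \sum_{J}\frac{\cc_J(q)}{W_J(q^2)}
\]
obtained by Möbius inversion over $\ast$-stable subsets. In that inversion the sign $(-1)^{r_\ast(J)}$ arises because the lattice of $\ast$-stable subsets of $S$ is Boolean on the set of $\ast$-orbits.

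\textbf{Step 2 (Euler characteristic).} After expanding $\sum_{I=I^\ast}(-1)^{r_\ast(I)}\cc_I(q)/W_I(q^2)$ via Step 1 as a sum over elements of $\cc$, the coefficient of each $q^{\ell(z)}$ becomes an alternating sum over $\ast$-stable subsets of some orbit set attached to $z$. This is an alternating sum over a Boolean lattice of $\ast$-orbits, and it vanishes unless that attached set is all of $S$. Geometrically it is the Lefschetz number of $\ast$ acting on a link in the Coxeter complex, whose $\ast$-fixed cells are indexed by $\ast$-stable $I$.

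For infinite $W$, no element has the full attached set, so every coefficient vanishes and \eqref{eq:main-infinite} follows. Equation \eqref{eq:main-infinite-proper} is then just the rearrangement that isolates the term $I=S$.

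For finite $W$ the only surviving contributions come from elements that are "maximal" with respect to the structure in Step 1. I would identify them using the involution $z\mapsto w_0 z$ (or $z\mapsto z w_0$) composed with the twist by $w_0$; this map sends $\ell\mapsto N-\ell$ and preserves $\cc$ up to the twist. That identification should produce the correction term $(-1)^{r_\ast(S)-1}q^{2N}\cc(-q^{-1})$ in \eqref{eq:main-finite}, in the same way that $q^{N}W(q^{-1})=W(q)$ enters the classical finite identity. Rationality then follows by induction on $|S|$: finite parabolics have polynomial $\cc_I$, and the recurrences express $\cc(q)$ through proper parabolics.

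\textbf{Main obstacle.} I expect Step 1 to be the hard part, specifically the precise statement of which twisted conjugations are length-additive and how fixed points (stabilizers) of twisted conjugation are accounted for. Ordinary cosets give a clean bijection $W\cong W^I\times W_I$, but twisted conjugation has nontrivial stabilizers, for example $szs^\ast=z$. My first attempt would be to use Hultman's/Richardson--Springer's description of twisted involutions via the operations $s\ltimes z=szs^\ast$ or $zs^\ast$ together with their length behaviour. With that description I would prove the factorization with $q^2$ weights only after passing to the alternating sum, rather than as an exact bijection.
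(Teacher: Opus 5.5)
There is a genuine gap: you never identify the expansion under which the identity holds, and the cancellation you anticipate in Step~2 does not occur. After multiplying by $W(q^2)$, the term indexed by $I$ is $W^I(q^2)\,\cc_I(q)=\sum_{x\in W^I,\,h\in\cc_I}q^{2\ell(x)+\ell(h)}$. This counts elements of $\cc_I=\cc\cap W_I$ twisted-conjugated \emph{outward} by minimal coset representatives, via $(x,h)\mapsto z=xh(x^{\ast})^{-1}\in\cc$. Your Step~1 runs in the opposite direction, twisting $\cc$ by $W_I$. That would produce quantities like $\cc(q)/W_I(q^2)$, which do not occur in the statement.

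For fixed $z$, the pairs $(x,h)$ over $z$ are exactly the simplices $xW_I$ ($x\in W^I$) fixed by $\tau_z\colon xW_I\mapsto zx^{\ast}W_{I^{\ast}}$. Their weights $2\ell(x)+\ell(x^{-1}zx^{\ast})$ are in general larger than $\ell(z)$ and vary from simplex to simplex. For $W=A_2$, $S=\{s,t\}$, $\ast=\id$, $z=e$, the signed terms $-W(q^2)+W^{\{s\}}(q^2)+W^{\{t\}}(q^2)$ of this single element lie in degrees $0,2,4,6$ and sum to $1-q^6$. So no Boolean alternating sum sits at the coefficient of $q^{\ell(z)}$.

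Regrouping by chambers $w$ does give a Boolean sum over $\ast$-stable $I$ with $\operatorname{supp}(w^{-1}zw^{\ast})\subseteq I\subseteq S\setminus(\DR(w)\cup\DR(w)^{\ast})$. However, it leaves critical chambers that cancel only against other chambers. In the infinite dihedral group $\langle s,t\rangle$ with $z=tst$, the chambers $w=e$ and $w=t$ survive with $+q^3$ and $-q^3$. The required cancellation is global, and your outline has no mechanism for it.

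The paper supplies that mechanism through a filtration and the Hopf trace formula. The sets $A_k(z)=\{w:\ell(w)+\ell(zw^{\ast})\le k\}$ are finite order ideals in the right weak order, stable under $w\mapsto zw^{\ast}$, so $\tau_z$ acts on $X_k(z)=\Sigma(A_k(z))$. This complex is contractible unless $W$ is finite and $A_k(z)=W$: add chambers in order of length, each meeting the earlier ones in a nonempty proper union of its panels. For a fixed simplex $xW_I$ with $x\in W^I$, the minimum of $\ell(w)+\ell(zw^{\ast})$ over its chambers is $2\ell(x)+\ell(x^{-1}zx^{\ast})$. Also $(-1)^{\dim\sigma}\sgn(\tau_z|_{\vrt(\sigma)})=(-1)^{r_\ast(S\setminus I)-1}$. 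Hence the coefficient of $q^k$ in $E^{\ast}_z(q)$ is the jump $\Lambda(\tau_z,X_k(z))-\Lambda(\tau_z,X_{k-1}(z))$. Contractibility then gives $E^{\ast}_z(q)=q^{\ell(z)}$ for infinite $W$, and summing over $z\in\cc$ gives the theorem.

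In the finite case there is one more jump, at $k=2N-\ell(z)=\ell(w_0)+\ell(zw_0)$. There $X_k(z)$ is the whole sphere and $\tau_z$ acts on top homology by $\det(z\circ\ast)=(-1)^{\ell(z)}\sgn(\ast)$. This is the source of the term $(-1)^{r_\ast(S)-1}q^{2N}\cc(-q^{-1})$. Your suggested map $z\mapsto zw_0$ cannot produce it. That map sends $\cc$ to a $\dm$-twisted class and lengths to $N-\ell(z)$ rather than $2N-\ell(z)$. The relevant symmetry is $w\mapsto ww_0$ on chambers, with $z$ held fixed.
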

By taking $\ast=\id$, $\cc=\Inv_{W,\id}$ and $\cc=T_W$, this proves the rationality
of the length-generating functions of the sets of involutions and reflections.

The rationality of $T_W(q)$ was listed in \cite{Brenti} as an open problem
originally posed by Stembridge but had been established earlier
by De Man \cite{DeMan99} by means of a modification of the Brink-Howlett finite
automaton (\cite{BH}) to count the vertices and edges of tree graphs
$G_\alpha$ associated with the positive roots of the Coxeter system.
More recently Biagioli et al. \cite{BHS} developed an alternative solution
in the case of affine Coxeter systems in the form of an explicit description of the
length-generating function of the set of reflections based on
a description of the Hasse diagram of the root poset.
Rationality in the affine case also follows from Evetts and Lathouwers
\cite{EveLat25} who, more generally, proved that the length-generating
functions of twisted conjugacy classes in virtually abelian groups are rational
and explicitly computable. The length polynomials of involutions in finite
Coxeter groups were studied and explicit formulas were given by
Hart and Rowley \cite{HarRow23}.

The remainder of the paper is organized as follows.
Section~\ref{sec:prelim} fixes notation and recalls various Coxeter-group
preliminaries.
In Section~\ref{sec:coxcomplex} we introduce facts about
Coxeter complexes and prove a contractibility statement.
Sections~\ref{sec:filtration} and \ref{sec:series} contain the main
topological ingredient of the proof.
For a twisted involution $z$, we introduce a filtration of the Coxeter complex
that is preserved by a simplicial involution associated with $z$. By
evaluating its Lefschetz numbers, we calculate the coefficients of a
power series associated with $z$. This approach was inspired by
Dymara's \cite{Dymara} derivation of the well-known recurrence relations
for the ordinary growth series of Coxeter groups.
In Section~\ref{sec:recurrence} we sum these power series over a subset $\cc$
closed under $\ast$-twisted conjugation to obtain Theorem~\ref{thm:main}.
For finite Coxeter systems, Section~\ref{sec:opposition} relates $\ast$-twisted
involutions to those for
the automorphism obtained by composing $\ast$ with opposition and records the
resulting reciprocity relations for length and twisted absolute length.
Section~\ref{sec:lusztig} combines these relations with the recurrences
from Sections~\ref{sec:series} and \ref{sec:recurrence} to
give an alternative proof of a power-series identity due to Lusztig
(\cite{Lusztig2012,Lusztig2015}).
Finally, Section~\ref{sec:classification} describes some aspects of the
practical use of the recurrence relations to compute the
length-generating function for a specific twisted conjugacy class of
twisted involutions.

\section{Preliminaries on Coxeter groups}\label{sec:prelim}
A \emph{Coxeter system} is a pair $(W,S)$ where $W$ is a group generated
by a set $S$ of \emph{simple reflections}, subject to the relation
$(st)^{m_{st}}=e$ for $s,t\in S$ with $m_{ss}=1$
and $m_{st}=m_{ts}\in\{2,3,\dots,\infty\}$ for $s\neq t$.
The matrix $(m_{st})_{s,t\in S}$ is called the \emph{Coxeter matrix} and
defines the \emph{Coxeter graph} $\Gamma(W)$ with the elements of $S$
as its vertices
and with an edge $\{s,t\}$, labelled $m_{st}$, whenever $m_{st}\neq 2$.

Throughout this paper $(W,S)$ will denote a Coxeter system of finite rank $|S|$.
Unless otherwise stated, proofs for the results mentioned in the remainder
of this section as well as further background can be found in
Humphreys \cite{Humphreys}.

The \emph{length} of an element $w\in W$ is defined as
$$
\ell(w):=\min\{n:w=s_1\cdots s_n, s_i\in S\},\qquad w\in W.
$$
For $I\subseteq S$, let $W_I:=\langle I\rangle\subseteq W$ be the
\emph{parabolic subgroup} generated by $I$.
The pair $(W_I,I)$ is again a Coxeter system with the same values $m_{st}$
for $s,t\in I$ and a length function that coincides with the restriction
of the length function of $W$.
We put
\begin{equation}\label{eq:cosetrep}
  W^I:=\{x\in W:\ell(xs)>\ell(x)\text{ for every }s\in I\}.
\end{equation}
Every left coset in $W/W_I$ can be written as $xW_I$
with $x\in W^I$ the unique minimal-length element of the coset.
Moreover, every $w\in W$ has a unique factorization or
\emph{parabolic decomposition}
\begin{equation*}
w=xu, \qquad x\in W^I,\quad u\in W_I,
\end{equation*}
for which
\begin{equation*}
\ell(xu)=\ell(x)+\ell(u).
\end{equation*}
By symmetry, the same applies to right cosets.

Every element $w\in {}^IW^J:={}^IW\cap W^J$ is the unique minimal
element of the double coset $W_IwW_J$ (\cite[Lemma~3.2.2]{DM20}).

Let $\Pi=\{\alpha_s:s\in S\}$, the set of \emph{simple roots} of $W$,
be the basis of an $\mathbb R$-vector
space $V$. We define a bilinear symmetric form on $V$ by
$(\alpha_s,\alpha_t)=-\cos(\pi/m_{st})$ (and $(\alpha_s,\alpha_t)=-1$
for $m_{st}=\infty$) and let $W$ act on $V$ by $s(v)=v-2(v,\alpha_s)\alpha_s$
for $s\in S$. The set $\Phi=\{w(\alpha_s):w\in W, s\in S\}$ is called the
\emph{root system} of $W$ and can be written as $\Phi^+\cup\Phi^-$ with
$\Phi^+=\{\sum_s{c_s\alpha_s}\in\Phi:c_s\ge 0\text{ for all }s\in S\}$
and $\Phi^-=-\Phi^+$.

An $S$-preserving automorphism $\delta:W\to W$ induces a linear
operator on $V$ defined by $\alpha_s\mapsto\alpha_{\delta(s)}$.
Since $\delta$ preserves the Coxeter matrix, it preserves the
bilinear form. It is easily verified that $\delta\circ s=\delta(s)\circ\delta$
for $s\in S$ and therefore $\delta\circ w=\delta(w)\circ\delta$
as linear operators on $V$ for all $w\in W$.

If the bilinear form is positive definite, the Coxeter group $W$ is
finite and referred to as \emph{spherical}. Otherwise, the group
is infinite. If $I\subseteq S$ and $W_I$ is spherical, we will say
that $I$ is spherical.
If $W$ is finite, it has a unique \emph{longest element} $w_0\in W$.
This element
satisfies $\ell(ww_0)=\ell(w_0w)=\ell(w_0)-\ell(w)$ for all $w\in W$.

If $\Gamma_1,\dots,\Gamma_m$ are the connected
components of $\Gamma(W)$, then $W\cong W_{I_1}\times\cdots\times W_{I_m}$,
where each $I_j$ is the set of vertices of $\Gamma_j$.
If $\Gamma(W)$ is connected, $W$ is \emph{irreducible}. The spherical
irreducible Coxeter groups have been classified into the types
$A_n$ $(n\ge 1)$, $B_n$ $(n\ge 2)$, $D_n$ $(n\ge 4)$, $E_6$, $E_7$,
$E_8$, $F_4$, $G_2$, $H_3$, $H_4$ and $I_2(m)$ $(m=5\text{ or }m\ge 7)$.
For convenience, we will identify a subset $I\subseteq S$
with the subgraph of $\Gamma(W)$ induced by $I$.

\section{Coxeter complexes}\label{sec:coxcomplex}

The \emph{Coxeter complex} associated with a Coxeter system is the simplicial
complex $\Sigma(W,S)$ whose simplices are the cosets
$xW_I$ with $x\in W$ and $I\subsetneq S$, where $xW_I$ is a face of $yW_J$
if and only if $yW_J\subseteq xW_I$. The maximal simplices $C_x:=xW_{\varnothing}$
are called \emph{chambers}. The vertices are the simplices $xW_{S\setminus \{s\}}$ with $s\in S$.
A simplex $xW_I$ is said to be of type $I$. It has dimension $|S|-|I|-1$, and
the chambers containing it are precisely $C_{xu}$ with $u\in W_I$.

For $A\subseteq W$, we let $\Sigma(A)$ denote the subcomplex consisting of all
simplices contained in at least one chamber $C_w$ with $w\in A$.
Equivalently,
\begin{equation}\label{eq:SigmaA}
\Sigma(A)=\{xW_I:xW_I\cap A\neq\varnothing\}.
\end{equation}
We write $|\Sigma(A)|$ for the geometric realization of $\Sigma(A)$.

For $w\in W$, let
$$
\DR(w):=\{s\in S:\ell(ws)<\ell(w)\}
$$
be its \emph{right descent set}. We say that $A\subseteq W$ is an
\emph{order ideal in the right weak order}
if for all $w\in W$ and $s\in\DR(w)$ we have $ws\in A$.

\begin{proposition}\label{prop:contractible}
Let $A\subseteq W$ be a finite and nonempty order ideal in the right weak
order and assume that $\DR(w)\neq S$ for all $w\in A$.
Then the geometric realization $|\Sigma(A)|$ is contractible.
\end{proposition}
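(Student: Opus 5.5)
Induction on $|A|$, building $\Sigma(A)$ by adding chambers one at a time in an order compatible with length.

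\textbf{Base case.} Since $A$ is a nonempty order ideal, repeatedly removing descents from any element of $A$ shows $e\in A$. If $A=\{e\}$, then $\Sigma(A)$ is the closed chamber $C_e$ with all its faces, which is a simplex and hence contractible.

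\textbf{Inductive step.} Suppose $|A|>1$ and choose $w\in A$ of maximal length. Put $A'=A\setminus\{w\}$.

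\emph{$A'$ satisfies the hypotheses.} It is an order ideal: if $v\in A'$ and $s\in\DR(v)$, then $vs\in A$ and $\ell(vs)<\ell(v)\le\ell(w)$, so $vs\neq w$. It is nonempty, finite, and inherits $\DR(v)\neq S$. By induction, $|\Sigma(A')|$ is contractible.

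\emph{The attaching locus.} We need $\Sigma(A)=\Sigma(A')\cup\overline{C_w}$, where $\overline{C_w}\cap\Sigma(A')$ is contractible. By \eqref{eq:SigmaA}, a face $wW_I$ of $C_w$ (with $I\subsetneq S$) lies in $\Sigma(A')$ iff $wW_I\cap A'\neq\varnothing$. I claim this happens iff $I\cap\DR(w)\neq\varnothing$.

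\begin{itemize}
\item If $s\in I\cap\DR(w)$, then $ws\in A'$ and $ws\in wW_I$.
\item Conversely, suppose $I\cap\DR(w)=\varnothing$. Then $w$ is the minimal element of $wW_I$, so every other $v\in wW_I$ has $\ell(v)>\ell(w)$ and cannot lie in $A$ by maximality of $\ell(w)$. Hence $wW_I\cap A'=\varnothing$.
\end{itemize}

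\emph{Identifying the intersection.} Write $D=\DR(w)$. The faces of $C_w$ are indexed by $I\subsetneq S$, with $wW_I$ a face of $wW_J$ iff $I\supseteq J$. Label vertices of $C_w$ by $s\in S$ via $wW_{S\setminus\{s\}}$; the face $wW_I$ then has vertex set $S\setminus I$. The condition $I\cap D\neq\varnothing$ says the vertex set $S\setminus I$ misses some element of $D$. So the intersection is the subcomplex of the simplex on $S$ consisting of faces whose vertex set does not contain all of $D$.

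Since $A'\neq\varnothing$ forces $w\neq e$, we have $D\neq\varnothing$. Write $S=D\sqcup E$. The intersection is then the join of $\partial\Delta^D$ (faces of the simplex on $D$ omitting some vertex) with the full simplex $\Delta^E$. By hypothesis $D\neq S$, so $E\neq\varnothing$. A join with a nonempty simplex is a cone, hence contractible.

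\emph{Gluing.} Both $\overline{C_w}$ and $\Sigma(A')$ are contractible subcomplexes, and their intersection is a contractible subcomplex. Gluing lemma (or Seifert–van Kampen, Mayer–Vietoris and Whitehead, all valid for CW complexes): the union $|\Sigma(A)|$ is contractible.

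\textbf{Main obstacle.} The delicate step is the exact identification of $\overline{C_w}\cap\Sigma(A')$. It relies on two things: maximality of $\ell(w)$, so that no longer coset elements lie in $A$; and minimality of $w$ in $wW_I$ when $I\cap\DR(w)=\varnothing$, which is the standard parabolic decomposition fact from Section~\ref{sec:prelim}. One must also check the order-reversing correspondence between faces and index sets carefully, so that the intersection is correctly recognized as a cone rather than as a sphere. The sphere case is exactly $D=S$, which the hypothesis excludes.
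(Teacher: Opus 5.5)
Your proof is correct and is essentially the paper's argument. The paper also adds chambers in nondecreasing order of length, shows that the new chamber $C_w$ meets the previously built complex exactly in the faces $wW_I$ with $I\cap\DR(w)\neq\varnothing$ (the union of the panels $wW_{\{s\}}$, $s\in\DR(w)$), and uses $\varnothing\neq\DR(w)\neq S$; the only cosmetic difference is that the paper concludes via a strong deformation retraction of $|C_w|$ onto this nonempty proper union of facets, extended by the identity, whereas you identify the union as the cone $\partial\Delta^D*\Delta^E$ and invoke a gluing lemma.
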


\begin{proof}
Since $A$ is nonempty and for each $w\in W$ either $w=e$ or $\ell(ws)<\ell(w)$
for some $s\in S$, we see that $e\in A$.
Now choose an ordering
$$
e=w_1,w_2,\ldots,w_m
$$
of the elements of $A$ in nondecreasing order of length.
Let $K_j$ be the subcomplex consisting of the
chambers $C_{w_1},\ldots,C_{w_j}$ and all their faces. Thus
$K_m=\Sigma(A)$.

We prove by induction on $j$ that $|K_j|$ is contractible. This is clear for
$j=1$. Now let $j>1$ and put $w=w_j$. First we will show that
\begin{equation}\label{eq:chamber-intersection}
C_{w}\cap K_{j-1} =\bigcup_{s\in\DR(w_j)} wW_{\{s\}},
\end{equation}
as subcomplexes.

If $s\in\DR(w)$, then $\ell(ws)=\ell(w)-1$ and thus $ws\in A$.
Since we ordered $ws$ before $w$, $C_{ws}\subseteq K_{j-1}$.
Therefore $wW_{\{s\}}=C_w\cap C_{ws}\subseteq C_{w}\cap K_{j-1}$.

Conversely, suppose $xW_I$ is a simplex contained in $C_w\cap K_{j-1}$.
Since $C_w$ contains $xW_I$, we have $w\in xW_I$ and therefore $xW_I=wW_I$.
Likewise, the simplex $xW_I$ is contained in some $C_v$ with $v\in A$
ordered before $w$ and thus with $\ell(v)\le\ell(w)$ and $vW_I=xW_I=wW_I$.
Suppose that
$I\cap\DR(w)=\varnothing$. Then by \eqref{eq:cosetrep} we have $w\in W_I$,
which means that $w$ is the unique
minimal-length element of the coset $wW_I=vW_I$, contrary to $\ell(v)\le\ell(w)$.
Thus
$I\cap\DR(w)\neq\varnothing$ and we can choose $s\in I\cap\DR(w)$. Since
$W_{\{s\}}\subseteq W_I$, we have $wW_{\{s\}}\subseteq wW_I$ as cosets,
which means that the simplex $wW_I$ is a face of the panel $wW_{\{s\}}$.
This proves the reverse inclusion in \eqref{eq:chamber-intersection}.

By assumption $\DR(w)\neq S$, and since $w\neq e$, we also have
$\DR(w)\neq\varnothing$. Hence $C_w\cap K_{j-1}$ is a nonempty proper
union of the panels of $C_w$. Passing to geometric realizations, $|C_w|$ is
a simplex and
$$
|C_w\cap K_{j-1}| =\bigcup_{s\in\DR(w_j)} |wW_{\{s\}}|,
$$
is a nonempty proper union of its facets. Therefore $|C_w|$ strongly deformation
retracts onto $|C_w\cap K_{j-1}|$, fixing this subspace pointwise.
Extending this deformation
retraction by the identity on $K_{j-1}$ yields a deformation retraction of $|K_j|$
onto $|K_{j-1}|$. Hence, by induction $|K_j|$ is contractible.
\end{proof}

For a finite simplicial complex $X$ preserved by a simplicial automorphism $\tau$, 
we define the Lefschetz number
\begin{equation}\label{eq:lefschetzdef}
\Lambda(\tau,X):=\sum_{\substack{\sigma\in X\\\tau(\sigma)=\sigma}}
(-1)^{\dim \sigma}\sgn\left(\tau|_{\vrt(\sigma)}\right).
\end{equation}

\begin{lemma}\label{lem:lefschetz-fixed}
Let $X$ be a finite contractible simplicial complex preserved by a simplicial
automorphism $\tau$. Then
\begin{equation*}
  \Lambda(\tau,X)=1.
\end{equation*}
\end{lemma}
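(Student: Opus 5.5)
The plan is to identify the combinatorial quantity $\Lambda(\tau,X)$ with the classical Lefschetz number of the induced map on rational homology, and then invoke contractibility.

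\textbf{Step 1: build a chain complex on which $\tau$ acts.} Orient each simplex of $X$ by choosing an ordering of its vertices, up to even permutations, and form the simplicial chain complex $C_\bullet(X;\Q)$. Its basis in degree $d$ is the set of oriented $d$-simplices. The automorphism $\tau$ induces a chain map $\tau_\#$ that sends the oriented simplex $[v_0,\dots,v_d]$ to $[\tau v_0,\dots,\tau v_d]$.

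\textbf{Step 2: compute the chain-level traces.} In this basis, $\tau_\#$ is a signed permutation matrix. A simplex $\sigma$ contributes to the diagonal only if $\tau(\sigma)=\sigma$. In that case $[\tau v_0,\dots,\tau v_d]=\sgn(\tau|_{\vrt(\sigma)})\,[v_0,\dots,v_d]$, so the diagonal entry is this sign. Hence
\[
\sum_d(-1)^d\tr\bigl(\tau_\#|_{C_d}\bigr)=\Lambda(\tau,X),
\]
which is exactly the definition \eqref{eq:lefschetzdef}.

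\textbf{Step 3: pass to homology (Hopf trace formula).} The alternating sum of traces on chains equals the alternating sum of traces on homology. To see this, apply additivity of trace to the $\tau_\#$-invariant short exact sequences
\[
0\to Z_d\to C_d\to B_{d-1}\to 0,\qquad 0\to B_d\to Z_d\to H_d\to 0.
\]
These are invariant because $\tau_\#$ is a chain map, so it preserves cycles and boundaries. The terms coming from $Z_d$ and $B_d$ then cancel in the alternating sum, leaving $\sum_d(-1)^d\tr(\tau_*|_{H_d(X;\Q)})$.

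\textbf{Step 4: use contractibility.} Since $|X|$ is contractible, $H_0(X;\Q)=\Q$ and $H_d(X;\Q)=0$ for $d>0$. The group $H_0$ is generated by the class of any vertex $v$, and $\tau_*[v]=[\tau v]=[v]$ because $X$ is connected. So $\tau_*$ has trace $1$ on $H_0$, and therefore $\Lambda(\tau,X)=1$.

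The only step needing care is Step 2. One must check that the orientation sign of a $\tau$-fixed simplex is precisely the sign of the vertex permutation, and that $\tau$-stable simplices with nontrivial vertex action are counted with that sign rather than dropped. Everything else is the standard Hopf trace argument.
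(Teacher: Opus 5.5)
Your proposal is correct and follows the paper's proof essentially step for step: orient the simplices, identify $\Lambda(\tau,X)$ with the alternating sum of chain-level traces via the sign of the vertex permutation on fixed simplices, pass to rational homology by the Hopf trace formula, and use contractibility together with the trivial action on $H_0$. The only difference is that you sketch the Hopf trace formula through the $\tau$-invariant short exact sequences for cycles and boundaries, whereas the paper simply cites it.
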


\begin{proof}
Choose an orientation for each simplex of $X$. For every $i\geq 0$, the
oriented $i$-simplices form a basis of the simplicial chain group
$C_i(X;\mathbb{Q})$. Since $\tau$ is a simplicial automorphism, it induces
a linear map
$$
  \tau_i \colon C_i(X;\mathbb{Q}) \longrightarrow C_i(X;\mathbb{Q}).
$$

Let $\sigma$ be an $i$-simplex of $X$. If $\tau(\sigma)\neq\sigma$,
then $\tau_i$ maps the basis vector corresponding to $\sigma$, up to sign,
to the basis vector corresponding to the distinct simplex $\tau(\sigma)$.
Thus $\sigma$ makes no contribution to $\tr(\tau_i)$.

Suppose now that $\tau(\sigma)=\sigma$. If $[v_0,\ldots,v_i]$
represents the chosen orientation of $\sigma$, then
$$
\tau_i[v_0,\ldots,v_i] = [\tau(v_0),\ldots,\tau(v_i)]
= \sgn\bigl(\tau|_{\vrt(\sigma)}\bigr) [v_0,\ldots,v_i].
$$
Hence
$$
\tr(\tau_i) = \sum_{\substack{\sigma\in X\\ \dim\sigma=i,\ \tau(\sigma)=\sigma}}
\sgn\bigl(\tau|_{\vrt(\sigma)}\bigr).
$$
By the definition of the Lefschetz number, it follows that
$$
\Lambda(\tau,X) = \sum_{i\geq 0}(-1)^i
\tr \bigl( \tau_i\colon C_i(X;\mathbb{Q})\to C_i(X;\mathbb{Q}) \bigr).
$$
Since $\tau$ is simplicial, the maps $\tau_i$ commute with the boundary
maps and therefore define a chain map. By the Hopf trace formula,
$$
\Lambda(\tau,X) = \sum_{i\geq 0}(-1)^i
\tr \bigl( \tau_{\ast}\colon H_i(X;\mathbb{Q})\to H_i(X;\mathbb{Q}) \bigr).
$$
Since $X$ is contractible,
$$
H_i(X;\mathbb{Q})=0 \quad\text{for } i>0,
\qquad H_0(X;\mathbb{Q})\cong\mathbb{Q}.
$$
Moreover, since $X$ is connected, $\tau$ acts trivially on $H_0(X;\mathbb{Q})$.
Therefore
$$
\Lambda(\tau,X)=1.
$$
\end{proof}

\section{A filtration for twisted involutions}\label{sec:filtration}
Let $\ast$ be an involutive automorphism of $W$ preserving $S$.
For a twisted involution $z\in\Inv_{W,\ast}$, we define
$$
\tau_z:\Sigma\to\Sigma,\quad \tau_z(xW_I):=zx^{\ast}W_{I^{\ast}}.
$$
Since $zz^{\ast}=e$, we have $\tau_z^2(xW_I)=xW_I$. Thus $\tau_z$ is
a simplicial involution of $\Sigma$.
For $x\in W^I$ we have
\begin{equation}\label{eq:tau-coset}
\tau_z(xW_I)=xW_I\quad\Longleftrightarrow\quad I=I^{\ast}
\text{ and } h:=x^{-1}zx^{\ast}\in W_I.
\end{equation}
Moreover, $hh^{\ast}=x^{-1}zx^{\ast}(x^{\ast})^{-1}z^{\ast}x=e$ and thus
$h\in\Inv_{W_I,\ast}\subseteq\Inv_{W,\ast}$.

We define a filtration of $W$ by the sets
\begin{equation*}
A_k(z):=\{w\in W:\ell(w)+\ell(zw^{\ast})\le k\}.
\end{equation*}

\begin{lemma}\label{lem:akz-filtration}
We have $A_k(z)=\varnothing$ for $k<\ell(z)$ and $A_k(z)\neq\varnothing$
for $k\ge\ell(z)$. Moreover, if $W$ is finite and $N=\ell(w_0)$, then
$w_0\notin A_k(z)$ for $k<2N-\ell(z)$ and $A_k(z)=W$ for $k\ge 2N-\ell(z)$.
\end{lemma}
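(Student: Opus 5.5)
The whole argument rests on the triangle inequality for the length function, together with the fact that $\ast$ preserves length. Because $\ast$ preserves $S$, it maps reduced words to reduced words, so $\ell(w^\ast)=\ell(w)=\ell(w^{-1})$ for every $w\in W$.

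For the first claim, write $z=(zw^\ast)(w^\ast)^{-1}$. Then
\[
\ell(z)\le \ell(zw^\ast)+\ell\bigl((w^\ast)^{-1}\bigr)=\ell(zw^\ast)+\ell(w)
\]
for every $w$. Hence no $w$ satisfies $\ell(w)+\ell(zw^\ast)<\ell(z)$, and $A_k(z)=\varnothing$ for $k<\ell(z)$. Conversely, $w=e$ gives $\ell(e)+\ell(z)=\ell(z)$, so $e\in A_k(z)$ for all $k\ge\ell(z)$.

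For the finite case, the key tool is the identity $\ell(vw_0)=N-\ell(v)$. We also need that $w_0^\ast=w_0$: since $\ast$ is a length-preserving automorphism, it maps the unique longest element to itself. Using this, we compute
\[
\ell(zw_0^\ast)=\ell(zw_0)=N-\ell(z),
\]
so $\ell(w_0)+\ell(zw_0^\ast)=2N-\ell(z)$. Therefore $w_0\notin A_k(z)$ exactly when $k<2N-\ell(z)$.

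To get $A_k(z)=W$ for $k\ge 2N-\ell(z)$, I need the upper bound $\ell(w)+\ell(zw^\ast)\le 2N-\ell(z)$ for all $w$. Substitute $w=w_0v$, so that $\ell(w)=N-\ell(v)$. Then $w^\ast=w_0v^\ast$ and $zw^\ast=zw_0v^\ast$. Writing $z':=zw_0$, we have $\ell(z')=N-\ell(z)$. Now $\ell(z'v^\ast)\ge \ell(z')-\ell(v)$ would give only a lower bound, so I instead use the right-multiplication identity $\ell(uw_0)=N-\ell(u)$ with $u=zw_0v^\ast w_0$. This gives
\[
\ell(zw^\ast)=\ell(zw_0v^\ast)=N-\ell\bigl(zw_0v^\ast w_0\bigr).
\]
Let $v':=w_0v^\ast w_0$, which has $\ell(v')=\ell(v)$. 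Then $zw_0v^\ast w_0=zv'$, and the triangle inequality gives $\ell(zv')\ge \ell(z)-\ell(v)$. Hence
\[
\ell(w)+\ell(zw^\ast)\le (N-\ell(v))+(N-\ell(z)+\ell(v))=2N-\ell(z),
\]
which is the desired bound.

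The only delicate step is this last one: arranging the $w_0$-conjugations so that the inequality points in the right direction. The point is to flip both terms by $w_0$ so that the triangle inequality becomes an upper bound.
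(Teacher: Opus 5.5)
Your proof is correct and follows the paper's argument. Both use the triangle inequality $\ell(z)\le\ell(zw^\ast)+\ell(w)$, with equality at $w=e$, and then flip by $w_0$ to turn this minimum into the maximum $2N-\ell(z)$, attained at $w_0$. The paper does the flip with right multiplication $w\mapsto ww_0$, which commutes with $w\mapsto zw^\ast$ because $w_0^\ast=w_0$, and gets the exact identity $\ell(ww_0)+\ell(z(ww_0)^\ast)=2N-\ell(w)-\ell(zw^\ast)$; your substitution $w=w_0v$ with $v'=w_0v^\ast w_0$ is the same computation after rewriting $w=(w_0vw_0)w_0$.
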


\begin{proof}
For $w\in W$, we have $\ell(z)=\ell((zw^{\ast})(w^{\ast})^{-1})\le\ell(zw^{\ast})+\ell(w)$
with equality occurring for $w=e$. Thus
\begin{equation}\label{eq:F-min}
\min_{w\in W}\left(\ell(w)+\ell(zw^{\ast})\right)=\ell(z).
\end{equation}
Hence $A_k(z)=\varnothing$ for $k<\ell(z)$ and $A_k(z)\neq\varnothing$ for $k\ge\ell(z)$.

Now suppose that $W$ is finite.
Since $w\mapsto ww_0$ is a bijection commuting with $w\mapsto zw^{\ast}$, and since
$\ell(ww_0)=N-\ell(w)$ for all $w\in W$, \eqref{eq:F-min} translates into
\begin{equation*}
\max_{w\in W}\left(\ell(w)+\ell(zw^{\ast})\right)=2N-\ell(z)
\end{equation*}
with equality for $w=w_0$.
Hence $w_0\notin A_k(z)$ for $k<2N-\ell(z)$ and $A_k(z)=W$
for $k\ge 2N-\ell(z)$.
\end{proof}

\begin{lemma}\label{lem:akz-ideal}
For $k\ge 0$, the set $A_k(z)$ is a finite order ideal in the right weak order.
Moreover, $A_k(z)$ is stable under the map $w\mapsto zw^{\ast}$ and therefore $\tau_z$
preserves $\Sigma(A_k(z))$.
\end{lemma}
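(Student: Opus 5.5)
Set $F(w):=\ell(w)+\ell(zw^{\ast})$, so that $A_k(z)=\{w:F(w)\le k\}$. We verify finiteness, stability and the order-ideal property separately.

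\emph{Finiteness.} Since $F(w)\ge\ell(w)$, we have $A_k(z)\subseteq\{w:\ell(w)\le k\}$. This set is finite because $S$ is finite.

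\emph{Stability.} Let $\phi(w)=zw^{\ast}$. Then
\[
\phi(\phi(w))=z(zw^{\ast})^{\ast}=zz^{\ast}w=w,
\]
using $zz^{\ast}=e$. Hence $F(\phi(w))=\ell(zw^{\ast})+\ell(w)=F(w)$, so $\phi$ preserves $A_k(z)$. For the complex, take a simplex $xW_I\in\Sigma(A_k(z))$ and, by \eqref{eq:SigmaA}, choose $w\in xW_I\cap A_k(z)$. Then $\phi(w)=zw^{\ast}$ lies in $zx^{\ast}W_{I^{\ast}}=\tau_z(xW_I)$ and also in $A_k(z)$. So $\tau_z(xW_I)$ meets $A_k(z)$, and $\tau_z$ preserves $\Sigma(A_k(z))$.

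\emph{Order ideal.} Let $w\in A_k(z)$ and $s\in\DR(w)$; we must show $ws\in A_k(z)$. We have $\ell(ws)=\ell(w)-1$, and $z(ws)^{\ast}=zw^{\ast}s^{\ast}$ with $s^{\ast}\in S$. Therefore
\[
\ell\bigl(z(ws)^{\ast}\bigr)\le\ell(zw^{\ast})+1,
\]
which gives $F(ws)\le F(w)\le k$. This is the only step that needs any thought, and it is immediate: the loss of one in $\ell(w)$ offsets the possible gain of one in the second term.

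The proof therefore uses only $zz^{\ast}=e$, the fact that $\ast$ preserves $S$, and the elementary bounds $|\ell(us)-\ell(u)|\le 1$ and $\ell(w)\le F(w)$. I expect no real obstacle in this lemma.
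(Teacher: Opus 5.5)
Your proof is correct and follows essentially the same route as the paper. Finiteness comes from $\ell(w)\le k$; the order-ideal property comes from $\ell(zw^{\ast}s^{\ast})\le\ell(zw^{\ast})+1$ offsetting $\ell(ws)=\ell(w)-1$; stability comes from $z(zw^{\ast})^{\ast}=w$. Your explicit check that $zw^{\ast}\in zx^{\ast}W_{I^{\ast}}=\tau_z(xW_I)$ also fills in the final step to $\Sigma(A_k(z))$, which the paper leaves implicit.
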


\begin{proof}
Finiteness follows from $\ell(w)\le k$ for all $w\in A_k(z)$.
Suppose that $w\in A_k(z)$ and $s\in S$ satisfy $\ell(ws)=\ell(w)-1$.
Then
$$
\ell(z(ws)^{\ast})=\ell((zw^{\ast})s^{\ast})\le\ell(zw^{\ast})+1.
$$
Consequently, $ws\in A_k(z)$.
Furthermore, if $w\in A_k(z)$, then
$$
\ell(zw^{\ast})+\ell(z(zw^{\ast})^{\ast})=\ell(zw^{\ast})+\ell(w)\le k.
$$
So $zw^{\ast}\in A_k(z)$. It follows that $\tau_z$ preserves $\Sigma(A_k(z))$.
\end{proof}

We set
$$
\Sigma^{\tau_z}:=\{\sigma\in\Sigma:\tau_z(\sigma)=\sigma\}.
$$
\begin{lemma}\label{lem:minF}
If $xW_I\in\Sigma^{\tau_z}$ with $x\in W^I$, then
\begin{equation}\label{eq:minF}
  \min_{u\in W_I}\left(\ell(xu)+\ell(z(xu)^{\ast})\right)=2\ell(x)+\ell(x^{-1}zx^{\ast}).
\end{equation}
Consequently, for $k\ge 0$,
\begin{equation}\label{eq:fixed-sublevel}
  \Sigma(A_k(z))\cap \Sigma^{\tau_z}
  =\{xW_I\in \Sigma^{\tau_z}:2\ell(x)+\ell(x^{-1}zx^{\ast})\le k\}.
\end{equation}
\end{lemma}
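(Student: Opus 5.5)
Fix $xW_I\in\Sigma^{\tau_z}$ with $x\in W^I$. By \eqref{eq:tau-coset} we have $I=I^\ast$ and $h:=x^{-1}zx^\ast\in W_I$, with $h\in\Inv_{W_I,\ast}$. The plan is to rewrite the function $u\mapsto \ell(xu)+\ell(z(xu)^\ast)$ on $W_I$ in terms of lengths inside $W_I$, using parabolic decompositions, and then to apply \eqref{eq:F-min} to the Coxeter system $(W_I,I)$ with the twisted involution $h$.

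\emph{Rewriting the function.} For $u\in W_I$ we have $\ell(xu)=\ell(x)+\ell(u)$, since $x\in W^I$. Also
$$z(xu)^\ast=zx^\ast u^\ast=x\,(x^{-1}zx^\ast)\,u^\ast=x\,h u^\ast,$$
and $hu^\ast\in W_I$ because $I=I^\ast$ gives $u^\ast\in W_I$. The same parabolic decomposition therefore gives $\ell(z(xu)^\ast)=\ell(x)+\ell(hu^\ast)$. Hence
$$\ell(xu)+\ell(z(xu)^\ast)=2\ell(x)+\ell(u)+\ell(hu^\ast).$$
Here $\ast$ restricts to an involutive automorphism of $W_I$ preserving $I$, and the length function of $W_I$ agrees with that of $W$. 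So \eqref{eq:F-min}, applied to $(W_I,I)$ and $h$, gives $\min_{u\in W_I}(\ell(u)+\ell(hu^\ast))=\ell(h)$, with the minimum attained at $u=e$. This proves \eqref{eq:minF}.

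\emph{The consequence.} By \eqref{eq:SigmaA}, $xW_I\in\Sigma(A_k(z))$ exactly when some $w\in xW_I$ lies in $A_k(z)$. The elements of the coset are the $xu$ with $u\in W_I$, so this says that the minimum in \eqref{eq:minF} is at most $k$. For fixed simplices this is the condition $2\ell(x)+\ell(x^{-1}zx^\ast)\le k$, which is \eqref{eq:fixed-sublevel}.

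The only step that needs care is that $\ast$ preserves $W_I$, so that $hu^\ast\in W_I$ and the parabolic decomposition applies on the right of $x$. This is guaranteed by $I=I^\ast$ from \eqref{eq:tau-coset}. I do not expect any real obstacle beyond this.
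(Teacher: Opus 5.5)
Your proof is correct and follows the paper's argument essentially step for step: the same parabolic-decomposition rewriting to $2\ell(x)+\ell(u)+\ell(hu^{\ast})$, and the same deduction of \eqref{eq:fixed-sublevel} from \eqref{eq:SigmaA}. The only difference is cosmetic. You cite \eqref{eq:F-min} for $(W_I,I)$ and $h$, whereas the paper reruns that triangle-inequality argument directly, via $\ell(h)\le\ell(hu^{\ast})+\ell(u)$ with equality at $u=e$.
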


\begin{proof}
If $xW_I\in\Sigma_{\tau_z}$, then $I=I^{\ast}$ by \eqref{eq:tau-coset}.
If $x\in W^I$ and $u\in W_I$, then
$\ell(xu)=\ell(x)+\ell(u)$ by parabolic length additivity.
Set $h=x^{-1}zx^{\ast}$. Since $h\in W_I$ and thus $hu^{\ast}\in W_I$, we have
$\ell(z(xu)^{\ast})=\ell((zx^{\ast})u^{\ast})=\ell(xhu^{\ast})=\ell(x)+\ell(hu^{\ast})$.
Hence
$$
\ell(xu)+\ell(z(xu)^{\ast})=2\ell(x)+\ell(u)+\ell(hu^{\ast}).
$$
Applying the triangle inequality to $h=(hu^{\ast})(u^{\ast})^{-1}$ gives
$\ell(h)\le \ell(hu^{\ast}) + \ell(u)$, with equality occurring for $u = e$.
Thus
$$
  \min_{u\in W_I}\left(\ell(xu)+\ell(z(xu)^{\ast})\right)=2\ell(x)+\ell(h),
$$
which is \eqref{eq:minF}.

By \eqref{eq:SigmaA}, a simplex $xW_I$ belongs to $\Sigma(A_k(z))$ if and only
if $xu\in A_k(z)$ for some $u\in W_I$. If $xW_I\in\Sigma^{\tau_z}$, equation
\eqref{eq:minF} shows that this is equivalent to $2\ell(x)+\ell(x^{-1}zx^{\ast})\le k$,
proving \eqref{eq:fixed-sublevel}.
\end{proof}

We now set $X_k(z):=\Sigma(A_k(z))$. By Lemma~\ref{lem:akz-ideal}, $X_k(z)$ is
a finite simplicial complex.
\begin{lemma}\label{lem:Xkz}
If $W$ is infinite, then
$$
\Lambda(\tau_z,X_k(z))=\begin{cases}
0,&\text{if }k<\ell(z),\\
1,&\text{if }k\ge\ell(z).
\end{cases}
$$
If $W$ is finite and $N=\ell(w_0)$, then
$$
\Lambda(\tau_z,X_k(z))=\begin{cases}
0,&\text{if }k<\ell(z),\\
1,&\text{if }\ell(z)\le k<2N-\ell(z),\\
1+(-1)^{n-1+\ell(z)}\sgn({\ast}),&\text{if }k\ge 2N-\ell(z),
\end{cases}
$$
where $n=|S|$ and $\sgn({\ast})$ is the sign of the permutation $\ast:S\to S$.
\end{lemma}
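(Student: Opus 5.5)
The approach is to combine Lemma~\ref{lem:akz-filtration}, Lemma~\ref{lem:akz-ideal}, Proposition~\ref{prop:contractible} and Lemma~\ref{lem:lefschetz-fixed}, and to handle the one case where contractibility fails by a direct computation.

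\textbf{Case $k<\ell(z)$.} By Lemma~\ref{lem:akz-filtration}, $A_k(z)=\varnothing$. Hence $X_k(z)$ is empty and $\Lambda(\tau_z,X_k(z))=0$.

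\textbf{Case $k\ge\ell(z)$, with $W$ infinite or $k<2N-\ell(z)$.}
\begin{enumerate}
\item By Lemma~\ref{lem:akz-ideal}, $A_k(z)$ is a finite nonempty order ideal in the right weak order, and $\tau_z$ preserves $X_k(z)$.
\item Next we check that $\DR(w)\neq S$ for all $w\in A_k(z)$.
 \begin{itemize}
 \item If $W$ is infinite, no element has $\DR(w)=S$. Otherwise every $s\in S$ would satisfy $\ell(ws)<\ell(w)$, which forces $W$ to be finite with $w=w_0$.
 \item If $W$ is finite, the only element with $\DR(w)=S$ is $w_0$. Lemma~\ref{lem:akz-filtration} gives $w_0\notin A_k(z)$ when $k<2N-\ell(z)$.
 \end{itemize}
\item Proposition~\ref{prop:contractible} now shows that $|X_k(z)|$ is contractible.
\item Lemma~\ref{lem:lefschetz-fixed} then gives $\Lambda=1$.
\end{enumerate}

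\textbf{Case $W$ finite and $k\ge 2N-\ell(z)$.} Here $A_k(z)=W$, so $X_k(z)=\Sigma$ is the full Coxeter complex, which is a triangulated sphere $S^{n-1}$. Contractibility is no longer available, so we compute $\Lambda$ directly. The cleanest route is the Hopf trace formula, as in the proof of Lemma~\ref{lem:lefschetz-fixed}:
\[
\Lambda(\tau_z,\Sigma)=1+(-1)^{n-1}\,\tr\bigl(\tau_{z\ast}\text{ on }H_{n-1}(\Sigma;\Q)\bigr).
\]
The trace is $\pm1$, according to whether $\tau_z$ preserves or reverses the fundamental class.

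To find this sign, compare a top simplex with its image.
\begin{itemize}
\item The automorphism $\tau_z$ sends the chamber $C_x$ to $C_{zx^\ast}$.
\item Left multiplication by $w\in W$ acts on $\Sigma$ with degree $(-1)^{\ell(w)}$. This is because $\Sigma$ is realized as the unit sphere cut by the reflecting hyperplanes, and $w$ acts as an orthogonal map of determinant $(-1)^{\ell(w)}$.
\item The map $xW_I\mapsto x^\ast W_{I^\ast}$ is induced by the linear automorphism of $V$ permuting the simple roots by $\ast$. This map is orthogonal with determinant $\sgn(\ast)$, so it has degree $\sgn(\ast)$ on the sphere.
\end{itemize}
Hence $\tau_z$ has degree $(-1)^{\ell(z)}\sgn(\ast)$, which yields $1+(-1)^{n-1+\ell(z)}\sgn(\ast)$.

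\textbf{Main obstacle.} The hard step is justifying the degree computation: identifying $\Sigma$ with the sphere compatibly with both the $W$-action and the diagram automorphism. Here we use that $\ast$ commutes appropriately, via $\delta\circ w=\delta(w)\circ\delta$ from Section~\ref{sec:prelim}.

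\textbf{Fallback: combinatorial orientation check.} Orient the chamber $C_e$ by an ordering of its vertices $W_{S\setminus\{s\}}$ indexed by $s\in S$.
\begin{itemize}
\item Crossing a panel from $C_x$ to $C_{xs}$ reverses the induced orientation relative to the fundamental cycle. Hence the coherent orientation of $C_x$ is $(-1)^{\ell(x)}$ times the orientation transported from $C_e$ by the type labels.
\item The automorphism $\tau_z$ maps the vertex of type $S\setminus\{s\}$ of $C_x$ to the vertex of type $S\setminus\{s^\ast\}$ of $C_{zx^\ast}$. This contributes a factor $\sgn(\ast)$.
\item It also contributes the factor $(-1)^{\ell(zx^\ast)-\ell(x)}=(-1)^{\ell(z)}$ from the coherent orientations.
\end{itemize}
The product is $(-1)^{\ell(z)}\sgn(\ast)$, matching the degree found above.
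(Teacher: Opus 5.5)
Your proposal is correct and follows the paper's proof essentially step for step: $X_k(z)$ is empty for $k<\ell(z)$; otherwise Proposition~\ref{prop:contractible} and Lemma~\ref{lem:lefschetz-fixed} apply; and for $k\ge 2N-\ell(z)$ you use the Hopf trace formula on the sphere with degree $\det(z\circ\ast)=(-1)^{\ell(z)}\sgn(\ast)$. Your fundamental-cycle check via $\sum_x(-1)^{\ell(x)}C_x$ is a nice purely combinatorial confirmation of that sign. The one thing to tidy up is low rank, since the formula $\Lambda(\tau_z,\Sigma)=1+(-1)^{n-1}\tr\bigl((\tau_z)_{\ast}|_{H_{n-1}}\bigr)$ presumes $n\ge 2$: for $n=0$ the complex $\Sigma$ is empty, and for $n=1$ it is two points with $H_0\cong\Q^2$. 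These cases must be checked directly, as the paper does, or handled with reduced homology.
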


\begin{proof}
By Lemma~\ref{lem:akz-filtration} we have
$\Lambda(\tau_z,X_k(z))=\Lambda(\tau_z,\varnothing)=0$
for $k<\ell(z)$.

Suppose that $k\ge\ell(z)$.
By Lemmas~\ref{lem:akz-filtration} and \ref{lem:akz-ideal},
$A_k(z)$ is a finite, nonempty ideal in the right weak order.
Since $\DR(w)=S$ only when $W$ is finite and $w=w_0$,
by Proposition~\ref{prop:contractible} and Lemma~\ref{lem:lefschetz-fixed}
$X_k(z)$ is contractible and $\Lambda(\tau_z,X_k(z))=1$
if either $W$ is infinite or $W$ is finite and $k<2N-\ell(z)$.

Now assume that $W$ is finite and $k\ge 2N-\ell(z)$. We have $A_k(z)=W$ and
$X_k(z)=\Sigma(W)$. We will write $\Sigma=\Sigma(W)$.
By the proof of Lemma~\ref{lem:lefschetz-fixed}, we have
$$
\Lambda(\tau_z,\Sigma) = \sum_{i\geq 0}(-1)^i
\tr \left((\tau_z)_{\ast}\colon H_i(\Sigma;\mathbb{Q})\to H_i(\Sigma;\mathbb{Q})
\right).
$$
If $W$ is finite, $|\Sigma|$ can be
realized as the unit sphere $S(V)\subseteq V$.
The simplicial map $\tau_z$ on $\Sigma(W)$ is induced on $S(V)$ by
the orthogonal automorphism $z\circ\ast:V\to V$, where the action
of $\ast$ is given by $\alpha_s\mapsto\alpha_{s^{\ast}}$.

We now assume that $n=\dim V=|S|\ge 2$.
Since $|\Sigma|=S(V)$ is an $(n-1)$-sphere, we have
$H_i(\Sigma;\Q)\cong\Q$ for $i=0$ and $i=n-1$, and
$H_i(\Sigma;\Q)=0$ for $0<i<n-1$. Moreover, $(\tau_z)_{\ast}|_{H_0}=\id$.
Since $H_{n-1}(\Sigma;\Q)\cong\Q$, the trace of
$\tau_z$ on $H_{n-1}(\Sigma;\Q)$ is the degree of the
self-map of $S(V)$ induced by $z\circ\ast$. The degree of
the map induced on a sphere by an invertible linear transformation is
the sign of its determinant. Since $z\circ\ast$ is an orthogonal automorphism,
$\det(z\circ\ast)\in\{\pm1\}$, and hence
$\tr\left((\tau_z)_{\ast}|_{H_{n-1}}\right)=\det(z\circ\ast)$.
We therefore obtain
$$
\Lambda(\tau_z,\Sigma)=1+(-1)^{n-1}\det(z\circ\ast).
$$
Since $\ast:V\to V$ permutes the simple roots, we have $\det(\ast)=\sgn(\ast)$.
Moreover, since $\det(s)=-1$ for $s\in S$, we have $\det(z)=(-1)^{\ell(z)}$.
This gives $\Lambda(\tau_z,\Sigma)=1+(-1)^{n-1+\ell(z)}\sgn(\ast)$.

For $n\le 1$, we have $\ast=\id$ and the value of $\Lambda(\tau_z,\Sigma)$
can be verified by direct evaluation of \eqref{eq:lefschetzdef}
for the three cases $(n,z)=(0,e),(1,e),(1,s)$.
\end{proof}

\section{A Lefschetz power series}\label{sec:series}
For a $\ast$-stable subset $J\subseteq S$, let
$$
r_{\ast}(J):=|J/\langle\ast\rangle|
$$
denote the number of orbits of $\ast$ acting on $J$. 
It is an elementary fact that
\begin{equation*}
\sgn\left(\ast|_J\right)=(-1)^{|J|-r_{\ast}(J)}.
\end{equation*}
We define the power series
\begin{equation}\label{eq:Et}
  E^{\ast}_z(q):=
  \sum_{xW_I\in\Sigma^{\tau_z}}
  (-1)^{r_{\ast}(S\setminus I)-1}q^{2\ell(x)+\ell(x^{-1}zx^{\ast})}.
\end{equation}
This is a well-defined formal power series because for $k\ge 0$ only
finitely many $x\in W$ have bounded length $\ell(x)\le k$.

By Lemma~\ref{lem:minF} and the definition \eqref{eq:lefschetzdef}
$$
\Lambda\left(\tau_z,X_k(z)\right) =
\sum_{\substack{\sigma=xW_I\in\Sigma^{\tau_z}\\
  2\ell(x)+\ell(x^{-1}zx^{\ast})\le k}}
  (-1)^{\dim\sigma}\sgn\left(\tau_z|_{\vrt(\sigma)}\right).
$$
A simplex $\sigma=xW_I\in\Sigma^{\tau_z}$ is of type $I$ with $I=I^{\ast}$.
Its vertices are indexed
by $S\setminus I$, and the permutation induced on its vertices is the permutation
induced by $\ast$ on $S\setminus I$. Hence
$\sgn\left(\tau_z|_{\vrt(\sigma)}\right)=(-1)^{|S\setminus I|-r_{\ast}(S\setminus I)}$.
Moreover, $\dim\sigma=|S\setminus I|-1$. This gives
$$
\Lambda\left(\tau_z,X_k(z)\right) =
\sum_{\substack{\sigma=xW_I\in\Sigma^{\tau_z}\\2\ell(x)+\ell(x^{-1}zx^{\ast})\le k}}
(-1)^{r_{\ast}(S\setminus I)-1}.
$$
On the other hand, the definition \eqref{eq:Et} of $E^{\ast}_z(q)$ gives
$$
[q^k]E^{\ast}_z(q) =
\sum_{\substack{\sigma=xW_I\in\Sigma^{\tau_z}\\ 2\ell(x)+\ell(x^{-1}zx^{\ast})=k}}
  (-1)^{r_{\ast}(S\setminus I)-1}
$$
as the coefficient of $q^k$. It follows that
\begin{equation}\label{eq:coefficient-jump}
  [q^k]E^{\ast}_z(q) =
  \Lambda\left(\tau_z,X_k(z)\right) - \Lambda\left(\tau_z,X_{k-1}(z)\right),
\end{equation}
where $X_{-1}(z)=\varnothing$.

\begin{theorem}\label{thm:fixpt}
Let $z\in\Inv_{W,\ast}$ be a twisted involution.

If $W$ is infinite, then
\begin{equation}\label{eq:fixpt-infinite}
  E^{\ast}_z(q)=q^{\ell(z)}.
\end{equation}

If $W$ is finite, let $w_0$ be the longest element of $W$ and let $N=\ell(w_0)$.
Then
\begin{equation}\label{eq:fixpt-finite}
  E^{\ast}_z(q)=q^{\ell(z)}+(-1)^{r_{\ast}(S)-1}(-q)^{2N-\ell(z)},
\end{equation}
\end{theorem}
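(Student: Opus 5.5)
The proof is a telescoping argument built on the coefficient identity \eqref{eq:coefficient-jump}. For every $k\ge 0$ we have
$$[q^k]E^{\ast}_z(q)=\Lambda(\tau_z,X_k(z))-\Lambda(\tau_z,X_{k-1}(z)).$$
Lemma~\ref{lem:Xkz} shows that $\Lambda(\tau_z,X_k(z))$ is a step function of $k$. So $E^{\ast}_z(q)$ is the sum, over the values of $k$ where this step function jumps, of the jump size times $q^k$. The plan is to read off the jumps case by case and then match the resulting expression with the claimed closed forms.

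If $W$ is infinite, Lemma~\ref{lem:Xkz} gives $\Lambda(\tau_z,X_k(z))=0$ for $k<\ell(z)$ and $=1$ for $k\ge\ell(z)$. Since $X_{-1}(z)=\varnothing$, the only nonzero difference occurs at $k=\ell(z)$, where it equals $1$. Hence $E^{\ast}_z(q)=q^{\ell(z)}$, which is \eqref{eq:fixpt-infinite}.

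If $W$ is finite, the same lemma gives a jump of $1$ at $k=\ell(z)$. It gives a second jump of $(-1)^{n-1+\ell(z)}\sgn(\ast)$ at $k=2N-\ell(z)$. There is one caveat: the two thresholds might coincide, i.e.\ $\ell(z)=2N-\ell(z)$, forcing $\ell(z)=N$. In that case there is a single jump, from $0$ directly to $1+(-1)^{n-1+\ell(z)}\sgn(\ast)$, so the sum of the two monomials is still correct. Also $\ell(z)\le N\le 2N-\ell(z)$, so the thresholds appear in the stated order. Therefore
$$E^{\ast}_z(q)=q^{\ell(z)}+(-1)^{n-1+\ell(z)}\sgn(\ast)\,q^{2N-\ell(z)}.$$

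It remains to match this sign with \eqref{eq:fixpt-finite}. Using $\sgn(\ast)=(-1)^{n-r_{\ast}(S)}$, which is the elementary fact stated at the start of this section, the sign becomes $(-1)^{2n-1+\ell(z)-r_{\ast}(S)}=(-1)^{r_{\ast}(S)-1}(-1)^{\ell(z)}$. Since $(-q)^{2N-\ell(z)}=(-1)^{\ell(z)}q^{2N-\ell(z)}$, this is exactly $(-1)^{r_{\ast}(S)-1}(-q)^{2N-\ell(z)}$.

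The main point needing care is the sign bookkeeping between $\det(z\circ\ast)$, $\sgn(\ast)$ and $r_{\ast}(S)$, together with the coincident-threshold case $\ell(z)=N$. The small-rank cases $n\le1$ are already covered by Lemma~\ref{lem:Xkz}.
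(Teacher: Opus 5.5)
Your proposal is correct and is essentially the paper's proof: it reads off the jumps of $k\mapsto\Lambda(\tau_z,X_k(z))$ from Lemma~\ref{lem:Xkz} via \eqref{eq:coefficient-jump}, converts the sign using $\sgn(\ast)=(-1)^{|S|-r_{\ast}(S)}$, and checks that the formula still holds when the two thresholds coincide ($\ell(z)=N$, i.e.\ $z=w_0$). Your explicit treatment of that coincident case and of the ordering $\ell(z)\le N\le 2N-\ell(z)$ spells out what the paper covers in a one-line remark.
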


\begin{proof}
This follows directly from \eqref{eq:coefficient-jump} and
Lemma~\ref{lem:Xkz} after observing that, in the finite case,
$\sgn(\ast)=(-1)^{|S|-r_{\ast}(S)}$ and thus
\begin{align*}
(-1)^{|S|-1+\ell(z)}\sgn(\ast)q^{2N-\ell(z)}&=(-1)^{r_{\ast}(S)-1+\ell(z)}q^{2N-\ell(z)}\\
&=(-1)^{r_{\ast}(S)-1}(-q)^{2N-\ell(z)}.
\end{align*}
Note that \eqref{eq:fixpt-finite} also holds if $\ell(z)=2N-\ell(z)$, i.e. if $z=w_0$.
\end{proof}

Let $F_{W,\ast}:=\{w\in W:w=w^{\ast}\}$ be the fixed-point subgroup of $W$ under $\ast$.
The following result appears in Steinberg \cite[Proposition~1.25]{Steinberg1968},
in the more general setting of arbitrary $S$-preserving automorphisms.
\begin{corollary}\label{cor:fwstar}
If $W$ is infinite, then
$$
\frac{1}{F_{W,\ast}(q)}=\sum_{I\subsetneq S,I=I^{\ast}}
  (-1)^{r_{\ast}(S\setminus I)-1}\frac{1}{F_{W_I,\ast}(q)}.
$$
If $W$ is finite, let $N$ be the length of its longest element. Then
$$
\frac{1+(-1)^{r_{\ast}(S)-1}q^N}{F_{W,\ast}(q)}=\sum_{I\subsetneq S,I=I^{\ast}}
  (-1)^{r_{\ast}(S\setminus I)-1}\frac{1}{F_{W_I,\ast}(q)}.
$$
\end{corollary}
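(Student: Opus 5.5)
The plan is to specialize Theorem~\ref{thm:fixpt} to the trivial twisted involution $z=e$, which lies in $\Inv_{W,\ast}$. Then $\tau_e(xW_I)=x^{\ast}W_{I^{\ast}}$, and we identify its fixed simplices and the exponents appearing in $E^{\ast}_e(q)$.

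\textbf{Fixed simplices of $\tau_e$.} Recall that simplices of $\Sigma$ are the cosets $xW_I$ with $I\subsetneq S$, and write such a coset with $x\in W^I$. By \eqref{eq:tau-coset}, $xW_I$ is fixed if and only if $I=I^{\ast}$ and $x^{-1}x^{\ast}\in W_I$. In that case $x^{\ast}W_I=xW_I$. Since $\ast$ preserves $S$ and length, and $I^{\ast}=I$, we have $x^{\ast}\in W^{I}$. Thus $x$ and $x^{\ast}$ are both the unique minimal element of the same coset, so $x=x^{\ast}$. Conversely, any $x\in F_{W,\ast}\cap W^I$ with $I=I^{\ast}$ gives a fixed simplex. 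For such $x$ we have $\ell(x^{-1}ex^{\ast})=\ell(e)=0$. Hence
\[
E^{\ast}_e(q)=\sum_{I\subsetneq S,\,I=I^{\ast}}(-1)^{r_{\ast}(S\setminus I)-1}\sum_{x\in F_{W,\ast}\cap W^I}q^{2\ell(x)}.
\]

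\textbf{Factorization of the inner sum.} Next I show that
\[
\sum_{x\in F_{W,\ast}\cap W^I}q^{\ell(x)}=\frac{F_{W,\ast}(q)}{F_{W_I,\ast}(q)} \qquad\text{for } I=I^{\ast}.
\]
Take $w\in F_{W,\ast}$ and write its parabolic decomposition $w=xu$ with $x\in W^I$ and $u\in W_I$. Applying $\ast$ gives $w=x^{\ast}u^{\ast}$ with $x^{\ast}\in W^I$ and $u^{\ast}\in W_I$. By uniqueness of the decomposition, $x=x^{\ast}$ and $u=u^{\ast}$. Conversely, any such fixed pair $(x,u)$ yields $w=xu\in F_{W,\ast}$. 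Since $\ell(w)=\ell(x)+\ell(u)$, we get $F_{W,\ast}(q)=\bigl(\sum_{x\in F_{W,\ast}\cap W^I}q^{\ell(x)}\bigr)F_{W_I,\ast}(q)$. Here $F_{W_I,\ast}(q)$ has constant term $1$, so it is invertible as a power series.

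\textbf{Conclusion.} Combining the two steps,
\[
E^{\ast}_e(q)=F_{W,\ast}(q^2)\sum_{I\subsetneq S,\,I=I^{\ast}}(-1)^{r_{\ast}(S\setminus I)-1}\frac{1}{F_{W_I,\ast}(q^2)}.
\]
Theorem~\ref{thm:fixpt} with $\ell(e)=0$ gives:
\begin{itemize}
\item in the infinite case, $E^{\ast}_e(q)=1$;
\item in the finite case, $E^{\ast}_e(q)=1+(-1)^{r_{\ast}(S)-1}q^{2N}$.
\end{itemize}
Divide by $F_{W,\ast}(q^2)$. Every series involved is a power series in $q^2$, so we may replace $q^2$ by $q$, which yields both identities.

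The only step needing care is the first one: the argument that the fixed condition forces $x=x^{\ast}$. It relies on $W^{I}$ being $\ast$-stable when $I=I^{\ast}$, which holds because $\ast$ preserves length and maps $I$ to $I^{\ast}=I$.
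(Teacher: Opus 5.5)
Your proposal is correct and follows the paper's proof essentially step for step: specialize Theorem~\ref{thm:fixpt} to $z=e$, identify $\Sigma^{\tau_e}$ as the simplices $xW_I$ with $I=I^{\ast}$ and $x=x^{\ast}\in W^I$, factor $F_{W,\ast}(q)$ through the parabolic decomposition, and replace $q^2$ by $q$. Your added argument that fixedness forces $x=x^{\ast}$, using that $W^I$ is $\ast$-stable when $I=I^{\ast}$, is a detail the paper leaves implicit.
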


\begin{proof}
We take $z=e$. Note that
$\Sigma^{\tau_e}=\{xW_I:I\subsetneq S,I=I^{\ast},x=x^{\ast}\}$.
Hence by definition \eqref{eq:Et}
\begin{equation}\label{eq:FW1}
\begin{aligned}
{E^{\ast}_e}(q)
  &=\sum_{\substack{I\subsetneq S\\I=I^{\ast}}}\sum_{\substack{x\in W^I\\x=x^{\ast}}}
    (-1)^{r_{\ast}(S\setminus I)-1}q^{2\ell(x)+\ell(x^{-1}x^{\ast})}\\
  &=\sum_{I\subsetneq S,I=I^{\ast}}(-1)^{r_{\ast}(S\setminus I)-1}
  \left(\sum_{x\in W^I,x=x^{\ast}}q^{2\ell(x)}\right).
\end{aligned}
\end{equation}
Let $I\subseteq S$ be $\ast$-stable. Then $w\in F_{W,\ast}$ factorizes
uniquely as $w=xu$ with $x\in W^I$, $u\in W_I$ but also as
$w=w^{\ast}=x^{\ast}u^{\ast}$ with $x^{\ast}\in W^{I^{\ast}}=W^I$
and $u^{\ast}\in W_{I^{\ast}}=W_I$, which implies that
$x=x^{\ast}$ and $u=u^{\ast}$. Thus
\begin{equation}\label{eq:FW2}
F_{W,\ast}(q)=\sum_{\substack{x\in W^I\\x=x^{\ast}}}
  \sum_{\substack{u\in W_I\\u=u^{\ast}}}q^{\ell(x)+\ell(u)}
=\left(\sum_{x\in W^I,x=x^{\ast}}q^{\ell(x)}\right)F_{W_I,\ast}(q).
\end{equation}
Applying Theorem~\ref{thm:fixpt} to evaluate $E^{\ast}_e(q)$, substituting
$q\mapsto q^{1/2}$ in \eqref{eq:FW1} and then combining with
\eqref{eq:FW2} gives the result.
\end{proof}

\begin{remark}
Taking $\ast=\id$, we obtain the well-known recurrence relation for $W(q)$:
\begin{equation*}
\sum_{I\subseteq S}\frac{(-1)^{|I|}}{W_I(q)}=
\begin{cases}
\frac{q^N}{W(q)},&\text{$W$ finite},\\
0,&\text{W infinite}.
\end{cases}
\end{equation*}
\end{remark}

\section{Recurrence relations for twisted involutions}\label{sec:recurrence}

In the introduction we defined $A(q):=\sum_{w\in A}q^{\ell(w)}$
for any subset $A\subseteq W$.
By the unique parabolic decomposition we have
$W(q)=W^I(q)W_I(q)$, where $W^I(q):=\sum_{x\in W^I}q^{\ell(x)}$.

\begin{proposition}\label{prop:sum-fixpt}
Let $(W,S)$ be a Coxeter system of finite rank and let $\ast:W\to W$ be an
involutive automorphism preserving $S$.
Let $\cc\subseteq\Inv_{W,\ast}$ be closed under $\ast$-twisted conjugation and
let $\cc_I=\cc\cap W_I$ for $I\subseteq S$. Then
\begin{equation}\label{eq:sum-fixpt}
  \sum_{z\in\cc}E^{\ast}_z(q)
  =W(q^2)
  \sum_{I\subsetneq S,I=I^{\ast}}
  (-1)^{r_{\ast}(S\setminus I)-1}\frac{\cc_I(q)}{W_I(q^2)}.
\end{equation}
\end{proposition}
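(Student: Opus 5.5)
We expand the left-hand side of \eqref{eq:sum-fixpt} using the definition \eqref{eq:Et}, interchange the order of summation, and reparametrize the fixed simplices. By \eqref{eq:tau-coset}, a fixed simplex $xW_I\in\Sigma^{\tau_z}$ with $x\in W^I$ corresponds to data with $I\subsetneq S$, $I=I^{\ast}$, and $h:=x^{-1}zx^{\ast}\in W_I$. Moreover $h\in\Inv_{W_I,\ast}$. Hence
\begin{equation*}
\sum_{z\in\cc}E^{\ast}_z(q)=\sum_{\substack{I\subsetneq S\\ I=I^{\ast}}}(-1)^{r_{\ast}(S\setminus I)-1}\sum_{\substack{(z,x)\\ z\in\cc,\ x\in W^I\\ x^{-1}zx^{\ast}\in W_I}}q^{2\ell(x)+\ell(x^{-1}zx^{\ast})}.
\end{equation*}
The series is formal, and for each power of $q$ only finitely many pairs contribute, so the rearrangement is legitimate.

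The key step is to fix a $\ast$-stable $I$ and replace the pair $(z,x)$ by $(x,h)$. The map $(z,x)\mapsto(x,h)$ with $h=x^{-1}zx^{\ast}$ has inverse $(x,h)\mapsto(xhx^{-\ast},x)$, where $x^{-\ast}:=(x^{\ast})^{-1}$. Since $\cc$ is closed under twisted conjugation, $z\in\cc$ if and only if $h\in\cc$. Combined with $h\in W_I$, this says $h\in\cc_I$. Thus the inner sum is a bijection onto $W^I\times\cc_I$, and it factorizes as
\begin{equation*}
\Bigl(\sum_{x\in W^I}q^{2\ell(x)}\Bigr)\Bigl(\sum_{h\in\cc_I}q^{\ell(h)}\Bigr)=W^I(q^2)\,\cc_I(q).
\end{equation*}
Here we should check that closure under twisted conjugation by an arbitrary $x$ is what we need. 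The map $z\mapsto x^{-1}zx^{\ast}$ with inverse $h\mapsto xh(x^{\ast})^{-1}$ is twisted conjugation by $x^{-1}$, and this is the form used in the introduction. So both directions are covered.

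Finally, we use $W(q^2)=W^I(q^2)W_I(q^2)$, which follows from the parabolic decomposition recalled above, to write $W^I(q^2)=W(q^2)/W_I(q^2)$ as formal power series. This is valid because $W_I(q^2)$ has constant term $1$. Summing over $I$ then yields \eqref{eq:sum-fixpt}.

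The only delicate point is the bijection, specifically making sure that the minimal coset representative $x\in W^I$ ranges over all of $W^I$ independently of $h$. This holds because the conditions defining a fixed simplex of $\tau_z$ in \eqref{eq:tau-coset} are exactly $I=I^{\ast}$ and $h\in W_I$, with no further constraint linking $x$ and $h$.
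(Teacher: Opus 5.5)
Your proposal is correct and is essentially the paper's proof. Both expand via \eqref{eq:Et}, use \eqref{eq:tau-coset} to reindex the pairs $(z,x)$ by $(x,h)\in W^I\times\cc_I$, factor the result as $W^I(q^2)\,\cc_I(q)$, and divide out $W_I(q^2)$; your explicit inverse map and your finiteness check for the formal rearrangement are details the paper leaves implicit.
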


\begin{proof}
Every simplex $\sigma$ has a unique expression $\sigma=xW_I$ with $I\subsetneq S$
and $x\in W^I$.
By \eqref{eq:tau-coset}, the set $\Sigma^{\tau_z}$ with $z\in\cc$ contains $xW_I$
if and only if $h=x^{-1}zx^{\ast}\in W_I\cap\cc=\cc_I$.
Using the definition \eqref{eq:Et} and rearranging terms, we get
\begin{align*}
  \sum_{z\in\cc}E^{\ast}_z(q)
  &=\sum_{z\in\cc}\sum_{xW_I\in\Sigma^{\tau_z}}
   (-1)^{r_{\ast}(S\setminus I)-1}q^{2\ell(x)+\ell(x^{-1}zx^{\ast})}\\
  &= \sum_{z\in\cc}\sum_{\substack{I\subsetneq S\\I=I^{\ast}}}
   \sum_{\substack{x\in W^I\\x^{-1}zx^{\ast}\in W_I}}
   (-1)^{r_{\ast}(S\setminus I)-1}q^{2\ell(x)+\ell(x^{-1}zx^{\ast})}\\
  &= \sum_{I\subsetneq S, I=I^{\ast}}\sum_{x\in W^I}\sum_{h\in\cc_I}
   (-1)^{r_{\ast}(S\setminus I)-1}q^{2\ell(x)+\ell(h)}\\
  &= \sum_{I\subsetneq S, I=I^{\ast}}(-1)^{r_{\ast}(S\setminus I)-1}W^I(q^2)\cc_I(q)\\
  &= W(q^2)\sum_{I\subsetneq S, I=I^{\ast}}(-1)^{r_{\ast}(S\setminus I)-1}\frac{\cc_I(q)}{W_I(q^2)}.
\end{align*}
\end{proof}

We can now prove the main theorem.
\begin{proof}[Proof of Theorem~\ref{thm:main}]
Assume first that $W$ is infinite. Summing equation \eqref{eq:fixpt-infinite}
of Theorem~\ref{thm:fixpt} over all $z\in\cc$ gives
$$
\sum_{z\in\cc}E^{\ast}_z(q)=\cc(q).
$$
Combining this with Proposition~\ref{prop:sum-fixpt} yields
$$
\frac{\cc(q)}{W(q^2)}
=\sum_{I\subsetneq S,I=I^{\ast}}(-1)^{r_{\ast}(S\setminus I)-1}\frac{\cc_I(q)}{W_I(q^2)}
$$
and rearranging gives \eqref{eq:main-infinite}.
Since each $\cc_I$ is closed under $\ast$-twisted conjugation within $W_I$, we can use
induction on $|S|$ to express $\cc(q)/W(q^2)$ as a rational combination
of functions $\cc_I(q)/W_I(q^2)$ with $W_I$ finite, which are all
quotients of polynomials. Since $W(q^2)$ is rational,
it follows that $\cc(q)$ is rational.

Now suppose that $W$ is finite. Then $\cc(q)$ is itself a polynomial and thus rational.
Equation \eqref{eq:fixpt-finite} gives
\begin{align*}
  \sum_{z\in\cc}E^{\ast}_z(q)
  &=\sum_{z\in\cc}
    \left(q^{\ell(z)}+(-1)^{r_{\ast}(S)-1}(-q)^{2N-\ell(z)}\right)\\
  &=\cc(q)+(-1)^{r_{\ast}(S)-1}q^{2N}\cc(-q^{-1}).
\end{align*}
Comparing this identity with \eqref{eq:sum-fixpt} gives \eqref{eq:main-finite}.
\end{proof}

\begin{corollary}\label{cor:invt}
The length-generating functions $\Inv_{W,\id}(q)$ and $T_W(q)$ counting the lengths of
involutions and reflections in $W$ are rational and satisfy the recurrence relations
\eqref{eq:main-infinite}-\eqref{eq:main-finite}.
\end{corollary}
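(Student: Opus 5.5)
This is a direct specialization of Theorem~\ref{thm:main}. The plan is to check that the two sets in question are admissible choices of $\cc$ when $\ast=\id$, and then quote the theorem.

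First take $\ast=\id$. The identity is an involutive automorphism of $W$ preserving $S$. With this choice $\ast$-twisted involutions are ordinary involutions, $\Inv_{W,\id}=\{z:z=z^{-1}\}$. Twisted conjugation becomes ordinary conjugation, $z\mapsto x^{-1}zx$.

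Next verify the two closure conditions.
\begin{itemize}
\item For $\cc=\Inv_{W,\id}$: if $z^{-1}=z$, then $(x^{-1}zx)^{-1}=x^{-1}z^{-1}x=x^{-1}zx$. So the set of involutions is closed under conjugation.
\item For $\cc=T_W$: every reflection $w^{-1}sw$ satisfies $(w^{-1}sw)^2=e$, so $T_W\subseteq\Inv_{W,\id}$. Moreover $x^{-1}(w^{-1}sw)x=(wx)^{-1}s(wx)\in T_W$, so $T_W$ is closed under conjugation.
\end{itemize}

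Theorem~\ref{thm:main} now applies to both sets. It gives rationality of $\Inv_{W,\id}(q)$ and $T_W(q)$, together with \eqref{eq:main-infinite}--\eqref{eq:main-finite}. In these relations $\cc_I=\cc\cap W_I$. For $\ast=\id$, every $I$ is $\ast$-stable and $r_{\ast}(J)=|J|$.

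One could additionally identify $\cc_I$ intrinsically, as $\Inv_{W_I,\id}$ and $T_{W_I}$ respectively. The first identification is immediate. The second needs the standard fact that a reflection of $W$ lying in $W_I$ is a reflection of $W_I$. This identification is not required, since the theorem is stated with $\cc_I=\cc\cap W_I$.

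No step here is a genuine obstacle. The only point needing care is the containment $T_W\subseteq\Inv_{W,\id}$ and the closure of $T_W$ under conjugation, both of which are one-line checks.
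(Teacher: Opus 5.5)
Your proposal is correct and matches the paper's proof, which likewise just applies Theorem~\ref{thm:main} with $\ast=\id$ to $\cc=\Inv_{W,\id}$ and $\cc=T_W$. The paper does explicitly record $\Inv_{W,\id}\cap W_I=\Inv_{W_I,\id}$ and $T_W\cap W_I=T_{W_I}$. That identification is what makes the relations genuine recurrences in the parabolic generating functions $\Inv_{W_I,\id}(q)$ and $T_{W_I}(q)$, so you should keep it rather than calling it unnecessary.
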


\begin{proof}
Apply Theorem~\ref{thm:main} to $\cc=\Inv_{W,\id}$ and $\cc=T_W$
with $\ast=\id$ and
observe that $\Inv_{W,\id}\cap W_I=\Inv_{W_I,id}$ and $T_W\cap W_I=T_{W_I}$.
\end{proof}

\begin{remark}\label{rem:cancellation}
\leavevmode
\begin{enumerate}[label=(\roman*),ref=(\roman*)]
\item\label{it:first}
Since for finite W the degree of the polynomial $\cc(q)$ is at most $N$,
in \eqref{eq:main-finite} the individual terms of $\cc(q)=\sum_i a_iq^i$
and $$(-1)^{r_{\ast}(S)-1}q^{2N}\cc(-q^{-1})=\sum_i (-1)^{r_{\ast}(S)-1+i}a_iq^{2N-i}$$
can overlap only at $q^N$. If $r_{\ast}(S)\equiv N\mod 2$, they cancel each other out.
\item\label{it:coeff-an}
Since $w_0$ is a twisted involution, we know that $a_N=1$ if $\cc=\Inv_{W,\ast}$.
For $\ast=\id$ and $\cc=T_W$, it follows from $T_W(1)=|T_W|=N$ that
$a_N=N-(a_0+...+a_{N-1})$.
\item\label{it:class}
Let $\cc$ be a $\ast$-twisted conjugacy class of twisted involutions and
let $I\subseteq S$ be an inclusion-minimal
$\ast$-stable subset for which $\cc_I\neq\varnothing$ and thus $\cc_I(q)\neq 0$.
Then it follows from the recurrence
relation \eqref{eq:main-infinite-proper} applied to $(W_I,I)$ that $W_I$ cannot be
infinite and from the recurrence relation \eqref{eq:main-finite} applied to $(W_I, I)$
and \ref{it:first} that $\cc_I=\{w_{0,I}\}$ and $r_{\ast}(I)\equiv\ell(w_{0,I})\mod 2$.
\end{enumerate}
\end{remark}

We let $\Phi^+$ and $\Phi^-$ denote the sets of positive and negative roots of $(W,S)$
and define the \emph{depth} of a positive root by
$\dpt(\alpha):=\min\{\ell(w):w\in W, w(\alpha)\in\Phi^-\}$.
We set $\Phi_W(q):=\sum_{\alpha\in\Phi^+}q^{\dpt(\alpha)}$.
\begin{corollary}
The root-depth generating function $\Phi_W(q)$ is rational. If $W$ is infinite, it satisfies
$$
\frac{\Phi_W(q)}{W(q)}
=\sum_{I\subsetneq S}
  (-1)^{|S|-|I|-1}\frac{\Phi_{W_I}(q)}{W_I(q)}.
$$
If $W$ is finite, let $w_0$ be the longest element and $N=\ell(w_0)$. Then
$$
\frac{\Phi_W(q)+(-1)^{|S|}q^{N+1}\Phi_W(q^{-1})}{W(q)}
= \sum_{I\subsetneq S}(-1)^{|S|-|I|-1}\frac{\Phi_{W_I}(q)}{W_I(q)}.
$$
\end{corollary}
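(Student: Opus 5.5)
The plan is to reduce the statement to Corollary~\ref{cor:invt} for $\cc=T_W$ and $\ast=\id$, using the relation between root depth and the length of the associated reflection. The key input is the known identity
$$
\ell(s_\alpha)=2\dpt(\alpha)-1,\qquad \alpha\in\Phi^+,
$$
where $s_\alpha\in T_W$ is the reflection in $\alpha$. This identity is the step I expect to carry the real content; it is a standard fact (e.g.\ Björner--Brenti), and I would cite it. If a proof is wanted, I would argue by induction on depth. If $\dpt(\alpha)>1$, choose $s\in S$ with $\dpt(s\alpha)=\dpt(\alpha)-1$. Then $s_{s\alpha}=ss_\alpha s$, and $\ell(ss_\alpha s)=\ell(s_\alpha)-2$ exactly when the depth drops. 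The inequality $\ell(s_\alpha)\le 2\dpt(\alpha)-1$ follows from the conjugation $s_\alpha=w^{-1}s_\beta w$ with $\beta$ simple. The reverse inequality follows from counting the inversions of $s_\alpha$, i.e.\ the positive roots sent negative.

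Since $\alpha\mapsto s_\alpha$ is a bijection $\Phi^+\to T_W$, the identity gives
$$
T_W(q)=\sum_{\alpha\in\Phi^+}q^{2\dpt(\alpha)-1}=q^{-1}\Phi_W(q^2).
$$
The same holds for each parabolic $W_I$. The positive roots of $W_I$ are the positive roots of $W$ in the span of $\{\alpha_s:s\in I\}$, and $T_{W_I}=T_W\cap W_I$ by Corollary~\ref{cor:invt}. The depth of such a root computed in $W_I$ agrees with its depth in $W$, because the length function of $W_I$ is the restriction of $\ell$ and we can apply the identity in both groups. Hence $T_{W_I}(q)=q^{-1}\Phi_{W_I}(q^2)$. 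Rationality of $\Phi_W$ follows immediately, since $\Phi_W(q^2)=qT_W(q)$ is rational and an even power series is rational in $q$ if and only if it is rational in $q^2$.

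For the recurrences, take $\ast=\id$, so $r_\ast(J)=|J|$ and every $I$ is $\ast$-stable. Substituting the expressions for $T_W(q)$ and $T_{W_I}(q)$ into \eqref{eq:main-infinite-proper} and multiplying by $q$ gives
$$
\frac{\Phi_W(q^2)}{W(q^2)}=\sum_{I\subsetneq S}(-1)^{|S|-|I|-1}\frac{\Phi_{W_I}(q^2)}{W_I(q^2)}.
$$
Replacing $q^2$ by $q$ then yields the infinite case.

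In the finite case, first compute
$$
T_W(-q^{-1})=(-q^{-1})^{-1}\Phi_W(q^{-2})=-q\,\Phi_W(q^{-2}).
$$
Therefore
$$
(-1)^{|S|-1}q^{2N}T_W(-q^{-1})=(-1)^{|S|}q^{2N+1}\Phi_W(q^{-2}).
$$
Insert this into \eqref{eq:main-finite}, multiply through by $q$, and substitute $q^2\mapsto q$. The left side becomes
$$
\frac{\Phi_W(q)+(-1)^{|S|}q^{N+1}\Phi_W(q^{-1})}{W(q)},
$$
which is the stated formula.
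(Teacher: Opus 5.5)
Your proposal is correct and follows the paper's proof. The paper likewise combines the bijection $\alpha\mapsto s_\alpha$ with $\ell(s_\alpha)=2\dpt(\alpha)-1$, which it cites from De Man's Lemma~1.2, to get $T_W(q)=q^{-1}\Phi_W(q^2)$, and then applies Corollary~\ref{cor:invt} with $\ast=\id$ and substitutes $q^2\mapsto q$. Your remarks on parabolic compatibility and on rationality of even power series spell out what the paper compresses into ``substituting $q\mapsto q^{1/2}$''; your sketch of the reverse inequality $\ell(s_\alpha)\ge 2\dpt(\alpha)-1$ is too vague to stand on its own, but citing the identity, as the paper does, is enough.
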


\begin{proof}
Each $\alpha\in\Phi^+$ corresponds bijectively to a reflection $t_\alpha\in T_W$
(see Björner and Brenti \cite[Proposition~4.4.5]{BB}) and
$\ell(t_\alpha)=2\dpt(\alpha)-1$ (see \cite[Lemma~1.2]{DeMan99}). Therefore
$\Phi_W(q^2)/q=T_W(q)$ and its rationality and the recurrence relations follow
from Corollary~\ref{cor:invt} and substituting $q\mapsto q^{1/2}$.
\end{proof}

Applying M\"obius inversion gives an analogue of Steinberg's formula in
\cite[Proposition~1.29]{Steinberg1968}, which expresses $W(q)$
in terms of the functions $W_I(q^{-1})$ associated with proper spherical
parabolic subgroups.
Let
$$
\mathcal S:=\{I\subseteq S:I=I^{\ast}, I\text{ is spherical}\}
$$
be the set of $\ast$-stable spherical subsets.

\begin{corollary}\label{cor:steinberg}
Let $(W,S)$ be a Coxeter system of finite rank. Let $\cc\subseteq\Inv_{W,\ast}$ be
closed under $\ast$-twisted conjugation and set $\cc_I=\cc\cap W_I$. Then
\begin{equation}\label{eq:spherical1}
  \frac{\cc(q)}{W(q^2)}
  =\sum_{I\in\mathcal S}
    (-1)^{r_{\ast}(I)}
    \frac{\cc_I((-q)^{-1})}{W_I((-q)^{-2})}.
\end{equation}
\end{corollary}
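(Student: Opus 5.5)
The plan is to argue by Möbius inversion over the lattice of $\ast$-stable subsets, using Theorem~\ref{thm:main} for each $\ast$-stable $J\subseteq S$. For such $J$ write $f(J):=\cc_J(q)/W_J(q^2)$ and $\bar f(J):=\cc_J((-q)^{-1})/W_J((-q)^{-2})$; for spherical $J$ these are rational functions. First rewrite the finite case \eqref{eq:main-finite} for $(W_J,J)$ in terms of $\bar f$. Since $W_J(q^2)$ is palindromic, $W_J(q^{-2})=q^{-2N_J}W_J(q^2)$, and therefore
\[
\frac{(-1)^{r_\ast(J)-1}q^{2N_J}\cc_J(-q^{-1})}{W_J(q^2)}=(-1)^{r_\ast(J)-1}\bar f(J).
\]
Moving $f(J)$ to the right-hand side and multiplying by a suitable sign, \eqref{eq:main-finite} becomes
\[
\sum_{I\subseteq J,\ I=I^\ast}(-1)^{r_\ast(I)}f(I)=(-1)^{r_\ast(J)}\bar f(J)\qquad (J \text{ spherical}),
\]
using $(-1)^{r_\ast(J\setminus I)}=(-1)^{r_\ast(J)-r_\ast(I)}$. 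The sign bookkeeping here is the only delicate part, and I would recheck it on $J=\varnothing$ (both sides equal $1$ if $e\in\cc$, else $0$) and on $J=\{s\}$. For non-spherical $J$, \eqref{eq:main-infinite} says that the same left-hand sum is $0$. So we may define $g(J)=(-1)^{r_\ast(J)}\bar f(J)$ if $J\in\mathcal S$ and $g(J)=0$ otherwise, and then $\sum_{I\subseteq J}(-1)^{r_\ast(I)}f(I)=g(J)$ holds for every $\ast$-stable $J$.

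Next apply Möbius inversion on the Boolean lattice of $\ast$-stable subsets. This lattice is isomorphic to the power set of $S/\langle\ast\rangle$, and its rank function is $r_\ast$. Inversion gives
\[
(-1)^{r_\ast(S)}f(S)=\sum_{J=J^\ast}(-1)^{r_\ast(S)-r_\ast(J)}g(J)=(-1)^{r_\ast(S)}\sum_{J\in\mathcal S}\bar f(J),
\]
and hence $f(S)=\sum_{J\in\mathcal S}\bar f(J)$.

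This would be \eqref{eq:spherical1} but without the factor $(-1)^{r_\ast(I)}$. That mismatch means my sign bookkeeping is off somewhere, probably in the relation between $\bar f$ and the reciprocal term, where $(-q)^{-1}$ versus $-q^{-1}$ and the parity of $2N_J$ interact. I would recompute the rewritten finite identity carefully, checking for instance whether it should read $(-1)^{r_\ast(J)}f$ on the left and $\bar f(J)$ on the right. The structural argument is unaffected: the finite recurrence is a relation between $f(J)$ and $\bar f(J)$, the infinite recurrence supplies the zeros, and inversion over $r_\ast$-graded subsets finishes. I expect pinning down the signs so that exactly $(-1)^{r_\ast(I)}$ survives to be the main obstacle, and I would settle it by testing the $\varnothing$, $A_1$ and $\tilde A_1$ cases.
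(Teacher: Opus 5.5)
Your strategy is the paper's: rewrite the finite recurrence for each spherical $\ast$-stable $J$ using $W_J(q^{-2})=q^{-2N_J}W_J(q^2)$, use the infinite recurrence to get zero for non-spherical $J$, and invert over the Boolean lattice of $\ast$-stable subsets with M\"obius function $(-1)^{r_\ast(S)-r_\ast(J)}$.

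As written, however, the proposal does not prove the statement. The key intermediate identity has a spurious factor $(-1)^{r_\ast(J)}$, and you end up with the false formula $f(S)=\sum_{J\in\mathcal S}\bar f(J)$. This already fails for $W$ of type $A_1$ with $\cc=\Inv_{W,\id}$, where its right side is $1+\frac{q^2-q}{q^2+1}\neq\frac{1+q}{1+q^2}$.

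The error is not where you suspect. Since $(-q)^{-1}=-q^{-1}$ and $(-q)^{-2}=q^{-2}$ exactly, your conversion of the reciprocal term into $(-1)^{r_\ast(J)-1}\bar f(J)$ is correct. The slip is in the rearrangement. For $(W_J,J)$, \eqref{eq:main-finite} reads
\[
f(J)+(-1)^{r_\ast(J)-1}\bar f(J)=\sum_{I\subsetneq J,\ I=I^\ast}(-1)^{r_\ast(J)-r_\ast(I)-1}f(I).
\]
Multiplying by $(-1)^{r_\ast(J)-1}$ also clears the sign in front of $\bar f(J)$, so one gets
\[
\sum_{I\subseteq J,\ I=I^\ast}(-1)^{r_\ast(I)}f(I)=\bar f(J),
\]
with no sign on the right. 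Your check at $J=\varnothing$ cannot detect this, since $r_\ast(\varnothing)=0$. The check at $J=\{s\}$ with $\cc=\Inv_{W,\id}$ does: $1-\frac{1+q}{1+q^2}=\frac{q^2-q}{q^2+1}=+\bar f(\{s\})$.

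With the corrected identity, set $g(J)=\bar f(J)$ for $J\in\mathcal S$ and $g(J)=0$ otherwise. Your inversion step then gives
\[
(-1)^{r_\ast(S)}f(S)=\sum_{J\in\mathcal S}(-1)^{r_\ast(S)-r_\ast(J)}\bar f(J),
\]
that is, $f(S)=\sum_{J\in\mathcal S}(-1)^{r_\ast(J)}\bar f(J)$, which is exactly \eqref{eq:spherical1}. This is the paper's argument: there $g(J)$ is called $H_J$ and is written as $q^{2N_J}\cc_J(-q^{-1})/W_J(q^2)$, with palindromicity of $W_J$ invoked only at the end. The structure of your proof is sound; the only missing piece is the correct sign, but without it the proposal concludes a false identity rather than the statement.
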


\begin{proof}
For each $\ast$-stable $J\subseteq S$, put
$$
R_J(q):=\frac{\cc_J(q)}{W_J(q^2)}.
$$
By Theorem~\ref{thm:main} applied to $(W_J,J)$,
\begin{equation}\label{eq:mobius-input}
\sum_{I\subseteq J,I=I^{\ast}}(-1)^{r_{\ast}(I)}R_I(q)=H_J(q),
\end{equation}
where, with $N_J=\ell(w_{0,J})$,
$$
H_J(q)=\begin{cases}
q^{2N_J}\cc_J(-q^{-1})/W_J(q^2),&J\in\mathcal S,\\
0,&J\notin\mathcal S.
\end{cases}
$$
Set $f(I)=(-1)^{r_{\ast}(I)}R_I$. Equation \eqref{eq:mobius-input} says
$H_J=\sum_{I\subseteq J,I=I^{\ast}}f(I)$. M\"obius inversion gives
$$
f(S)=\sum_{J\subseteq S,J=J^{\ast}}(-1)^{r_{\ast}(S\setminus J)}H_J
=\sum_{J\in\mathcal S}(-1)^{r_{\ast}(S\setminus J)}H_J.
$$
Multiplying by $(-1)^{r_{\ast}(S)}$ yields
$$
R_S=\sum_{J\in\mathcal S}(-1)^{r_{\ast}(J)}H_J.
$$
Now use the well-known fact that $W(q)=q^NW(1/q)$ for finite $W$
to obtain \eqref{eq:spherical1}.
\end{proof}

\begin{example}
Let $W=U_n$ be the universal Coxeter group of rank $n\ge2$, so $m_{st}=\infty$ for
$s\ne t$. Its only spherical standard parabolic subgroups are of rank 0 and 1.
For $I=\varnothing$,
$$
W_{\varnothing}=1,\qquad \Inv_{W_I,\id}(q)=1.
$$
For $I=\{s\}$,
$$
W_I(q)=A_1(q)=1+q,\qquad \Inv_{W_I,\id}(q)=1+q.
$$
Corollary~\ref{cor:steinberg} gives
$$
\frac{\Inv_{U_n,\id}}{U_n(q^2)}(q)=1-n\frac{1-q^{-1}}{1+q^{-2}}=1-n\frac{q^2-q}{q^2+1}.
$$
Since
$$
U_n(q)=\frac{1+q}{1-(n-1)q},
$$
we obtain
$$
\Inv_{U_n,\id}(q)=1+\frac{nq}{1-(n-1)q^2}.
$$
Thus the number of ordinary involutions of length $2k+1$ is $n(n-1)^k$ for $k\ge 0$.
\end{example}

\section{Twisted absolute length and opposition}\label{sec:opposition}

For twisted involutions, Hultman \cite{Hultman2005} introduced
the \emph{twisted absolute length} $\ell^{\ast}(z)$ of a $\ast$-twisted
involution $z$.
Here we will use Lusztig's \cite{Lusztig2012} equivalent eigenspace
characterization of $\ell^{\ast}$.
Let $V$ be the real reflection representation of $W$ and let
$\ast$ act on $V$ by $\alpha_s\mapsto\alpha_{s^{\ast}}$.
For $z\in\Inv_{W,\ast}$, the linear operator $z\circ\ast:V\to V$ is again an involution.
Letting $\nu(A)$ denote
the dimension of the $(-1)$-eigenspace of an involution $A\in\GL(V)$,
we set
$$
\ell^{\ast}(z):=\nu(z\circ\ast)-\nu(\ast).
$$
Observe that $\ell(z)\equiv \ell^{\ast}(z)\mod 2$, since
$$
(-1)^{\ell(z)}=\det(z)=\frac{\det(z\circ\ast)}{\det(\ast)}=(-1)^{\nu(z\circ\ast)-\nu(\ast)}
=(-1)^{\ell^{\ast}(z)}.
$$
Since $(w^{-1}zw^{\ast})\circ\ast=w^{-1}\circ(z\circ\ast)\circ w$ as linear
operators, $\ell^{\ast}(z)$ is constant on twisted conjugacy classes.
Moreover, $\ell^*$ is compatible with restriction to
parabolic subgroups.

For an ordinary involution $z\in\Inv_{W,\id}$, $\ell^{\id}(z)=\nu(z)$
coincides with the absolute length (or reflection length)
$$
\ell_T(z):=\min\{n:z=t_1\cdots t_n,t_i\in T_W\}.
$$
For finite Coxeter groups, this follows from Carter \cite{Car72};
for arbitrary Coxeter groups, it follows from Richardson's classification of
involutions \cite{Ric82} together with the finite result.

For the remainder of this section, we assume that $W$ is finite.
Conjugation by $w_0$ provides an $S$-preserving automorphism $\omega:W\to W$
defined by $\omega(w):=w_0ww_0$, which corresponds to the \emph{opposition
involution} on the Coxeter graph $\Gamma(W)$.
We set $\dm:=\omega\circ\ast$. Since $w_0^{\ast}=w_0$, $\omega$
and $\ast$ commute, and $\dm$ is again an $S$-preserving involutive
automorphism of $W$.
Moreover, since
$w_0(\alpha_s)=-\alpha_{w_0sw_0}=-\alpha_{\omega(s)}$ for $s\in S$,
we have $\omega=-w_0$ as linear operators on $V$.

\begin{proposition}\label{prop:stardiamond}
The map $z\mapsto zw_0$ gives a bijection $\Inv_{W,\ast}\to\Inv_{W,\dm}$ which
maps $\ast$-twisted conjugacy classes bijectively onto $\dm$-twisted
conjugacy classes. Moreover, $\ell(zw_0)=N-\ell(z)$ and
$\ell^{\dm}(zw_0)=a-\ell^{\ast}(z)$, where $N=\ell(w_0)$,
$a=\ell^{\ast}(w_0)=r_{\ast}(S)+r_{\dm}(S)-|S|$ and $a\equiv N\mod 2$.
\end{proposition}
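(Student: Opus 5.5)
The plan is to verify the bijection directly, then get the length formula from the standard property of $w_0$, and obtain the twisted-absolute-length formula from the linear identity $\omega=-w_0$ on $V$.

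\emph{Bijection and classes.} Let $z\in\Inv_{W,\ast}$. Then
$(zw_0)^{\dm}=\omega(z^{\ast}w_0^{\ast})=w_0z^{-1}w_0w_0=w_0z^{-1}=(zw_0)^{-1}$,
using $w_0^{\ast}=w_0$ and $w_0^2=e$. So $zw_0\in\Inv_{W,\dm}$. The inverse map is $y\mapsto yw_0$, by the same computation with the roles of $\ast$ and $\dm$ exchanged; this works because $\omega\circ\dm=\ast$. For twisted conjugacy,
$(x^{-1}zx^{\ast})w_0=x^{-1}z w_0\,(w_0x^{\ast}w_0)=x^{-1}(zw_0)x^{\dm}$.
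Hence $\ast$-twisted conjugation by $x$ corresponds exactly to $\dm$-twisted conjugation by $x$, and classes map bijectively onto classes. The identity $\ell(zw_0)=N-\ell(z)$ is the standard property of $w_0$ recalled in Section~\ref{sec:prelim}.

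\emph{Twisted absolute length.} As linear operators, $\dm=\omega\circ\ast=-w_0\circ\ast$. Therefore
$(zw_0)\circ\dm=-\,z\circ w_0\circ w_0\circ\ast=-(z\circ\ast)$.
For an involution $A$ on the $n$-dimensional space $V$ (with $n=|S|$) we have $\nu(-A)=n-\nu(A)$. This gives
$\ell^{\dm}(zw_0)=n-\nu(z\circ\ast)-\nu(\dm)=\bigl(n-\nu(\ast)-\nu(\dm)\bigr)-\ell^{\ast}(z)$.
So $a:=n-\nu(\ast)-\nu(\dm)$. A permutation of a basis whose orbits have size at most $2$ has a $(-1)$-eigenspace of dimension equal to the number of $2$-cycles. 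Thus $\nu(\ast)=|S|-r_{\ast}(S)$, and likewise for $\dm$, since $\dm$ also permutes $\Pi$. This yields $a=r_{\ast}(S)+r_{\dm}(S)-|S|$.

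Taking $z=e$ gives $\ell^{\dm}(w_0)=a-\ell^{\ast}(e)=a$. The statement asks for $a=\ell^{\ast}(w_0)$ instead, which is the same argument with $\ast$ and $\dm$ swapped, using the symmetry of the formula for $a$. Finally, $a\equiv N \pmod 2$ follows from the parity observation $\ell\equiv\ell^{\ast}$ applied to $w_0\in\Inv_{W,\ast}$.

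\emph{Main obstacle.} No step is deep. The point that needs care is the sign identity $\omega=-w_0$ and the commutation of $\ast$ with $w_0$ as operators on $V$. Both are already noted in the text.
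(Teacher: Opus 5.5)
Your proof is correct and follows the paper's argument essentially step for step. Both use the direct verification of the bijection and the equivariance $x^{-1}zx^{\ast}w_0=x^{-1}(zw_0)x^{\dm}$, the identity $(zw_0)\circ\dm=-(z\circ\ast)$ coming from $\omega=-w_0$, the count $\nu(\ast)=|S|-r_{\ast}(S)$, and the parity observation giving $a\equiv N$. The only cosmetic difference is that the paper computes $\ell^{\ast}(w_0)=\nu(-\dm)-\nu(\ast)$ directly, while you obtain it by specializing the general formula with the roles of $\ast$ and $\dm$ swapped. Plugging $z=w_0$ into your formula, which gives $0=a-\ell^{\ast}(w_0)$, would do this without the swap.
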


\begin{proof}
If $z^{\ast}=z^{-1}$ then $(zw_0)^{\dm}=w_0(zw_0)^{\ast}w_0=w_0z^{-1}=(zw_0)^{-1}$.
Moreover, if $z'=x^{-1}zx^{\ast}$ is $\ast$-twisted conjugate to $z$, then
$z'w_0=x^{-1}zx^{\ast}w_0=x^{-1}zw_0(w_0x^{\ast}w_0)=x^{-1}(zw_0)x^{\dm}$ is
$\dm$-twisted conjugate to $zw_0$.

It is a standard fact that $\ell(zw_0)=N-\ell(z)$. Now let $r_{\ast}=r_{\ast}(S)$,
$r_{\dm}=r_{\dm}(S)$ and $n=|S|$.
Since $\ast$ maps $\alpha_s$ to $\alpha_{s^{\ast}}$ and each two-element $\ast$-orbit
contributes one $(-1)$-eigenvector, we have $\nu(\ast)=n-r_{\ast}$. Therefore
$\ell^{\ast}(w_0)=\nu(w_0\circ\ast)-\nu(\ast)=\nu(-\dm)-(n-r_{\ast})=r_{\dm}-(n-r_{\ast})
=r_{\ast}+r_{\dm}-n$.
We saw above that $a=\ell^{\ast}(w_0)\equiv\ell(w_0)=N\mod 2$.

Finally, observe that $\ell^{\ast}(z)=\nu(z\circ\ast)-\nu(\ast)=
\nu(z\circ\ast)-(n-r_{\ast})$ and that
$\ell^{\dm}(zw_0)=\nu(zw_0\circ\dm)-\nu(\dm)=\nu(-z\circ\ast)-\nu(\dm)=
n-\nu(z\circ\ast)-(n-r_{\dm})$.
Hence $\ell^{\dm}(zw_0)=a-\ell^{\ast}(z)$.
\end{proof}

The bivariate generating function
$$
B_{W,\ast}(q,t):=\sum_{z\in\Inv_{W,\ast}}q^{\ell(z)}t^{\ell^{\ast}(z)}
$$
records the lengths and twisted absolute lengths of twisted involutions.
\begin{corollary}\label{cor:bivar}
For $W$ finite, $N=\ell(w_0)$ and $a=r_{\ast}(S)+r_{\dm}(S)-|S|$, we have
\begin{equation*}
B_{W,\diamond}(q,t)=q^Nt^aB_{W,\ast}(q^{-1},t^{-1}).
\end{equation*}
\end{corollary}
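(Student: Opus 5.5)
The identity follows by reindexing $B_{W,\dm}$ along the bijection of Proposition~\ref{prop:stardiamond}. By that proposition, $z\mapsto zw_0$ is a bijection $\Inv_{W,\ast}\to\Inv_{W,\dm}$. It satisfies $\ell(zw_0)=N-\ell(z)$ and $\ell^{\dm}(zw_0)=a-\ell^{\ast}(z)$, where $a=\ell^{\ast}(w_0)=r_{\ast}(S)+r_{\dm}(S)-|S|$.

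Since every element of $\Inv_{W,\dm}$ has the form $zw_0$ for a unique $z\in\Inv_{W,\ast}$, we may write
$$
B_{W,\dm}(q,t)=\sum_{z\in\Inv_{W,\ast}}q^{\ell(zw_0)}t^{\ell^{\dm}(zw_0)}
=\sum_{z\in\Inv_{W,\ast}}q^{N-\ell(z)}t^{a-\ell^{\ast}(z)}.
$$
Factoring out $q^Nt^a$ leaves
$$
q^Nt^a\sum_{z\in\Inv_{W,\ast}}(q^{-1})^{\ell(z)}(t^{-1})^{\ell^{\ast}(z)}=q^Nt^aB_{W,\ast}(q^{-1},t^{-1}).
$$

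There is no real obstacle: the sum is finite because $W$ is finite, so reindexing and factoring are legitimate operations on Laurent polynomials. All the substantive content, namely that the bijection reverses length and twisted absolute length as stated, is already contained in Proposition~\ref{prop:stardiamond}.
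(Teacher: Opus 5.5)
Your proof is correct and is essentially the paper's argument: the paper says the corollary ``follows directly from Proposition~\ref{prop:stardiamond},'' and you make that explicit by reindexing $B_{W,\dm}$ along the bijection $z\mapsto zw_0$ and substituting $\ell(zw_0)=N-\ell(z)$ and $\ell^{\dm}(zw_0)=a-\ell^{\ast}(z)$.
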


\begin{proof}
This follows directly from Proposition~\ref{prop:stardiamond}.
\end{proof}

The relationship between the length-generating functions for the subsets of
fixed elements of $\ast$ and $\dm$ is as follows.
\begin{proposition}\label{prop:fw-opposition}
For $W$ finite and $a=r_{\ast}(S)+r_{\dm}(S)-|S|$, we have
\begin{equation*}
F_{W,\dm}(q)=\left(\frac{1+q}{1-q}\right)^a F_{W,\ast}(-q).
\end{equation*}
\end{proposition}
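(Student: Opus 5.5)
The plan is to use an explicit product formula for $F_{W,\ast}(q)$ in terms of the basic invariants of $W$, rather than the recurrence of Corollary~\ref{cor:fwstar}. That recurrence is awkward here because $\ast$-stable and $\dm$-stable subsets of $S$ are different families.

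\textbf{Step 1: the product formula.} Since $\ast$ acts on $V$ normalizing $W$, it acts on the invariant ring $\mathbb R[V]^W$ and preserves degrees. We can therefore choose homogeneous basic invariants $f_1,\dots,f_n$ of degrees $d_1,\dots,d_n$ with $\ast f_i=\varepsilon_i f_i$ and $\varepsilon_i\in\{\pm1\}$. To do so, pick in each degree a $\ast$-stable complement of the decomposable invariants and diagonalize $\ast$ on it. Steinberg \cite{Steinberg1968} (in the setting where Corollary~\ref{cor:fwstar} comes from) gives
\[
F_{W,\ast}(q)=\prod_{i=1}^n\frac{1-\varepsilon_i q^{d_i}}{1-\varepsilon_i q}.
\]
The same source gives the companion fact that the multiset $\{\varepsilon_i\}$ equals the multiset of eigenvalues of $\ast$ on $V$. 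It follows that $r_\ast(S)=\#\{i:\varepsilon_i=1\}$, because $\nu(\ast)=n-r_\ast(S)$, as computed in the proof of Proposition~\ref{prop:stardiamond}.

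\textbf{Step 2: the eigenvalues for $\dm$.} We have $\dm=\omega\circ\ast$ and $\omega=-w_0$ on $V$. Since $w_0\in W$ fixes every $f_i$, the map $-w_0$ multiplies $f_i$ by $(-1)^{d_i}$. So the same $f_i$ are eigenvectors for $\dm$, with eigenvalues $\varepsilon_i^{\dm}=(-1)^{d_i}\varepsilon_i$. Applying Step~1 to $\dm$ gives
\[
F_{W,\dm}(q)=\prod_i\frac{1-\varepsilon_i(-q)^{d_i}}{1-(-1)^{d_i}\varepsilon_i q}
\quad\text{and}\quad
r_\dm(S)=\#\{i:(-1)^{d_i}\varepsilon_i=1\}.
\]

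\textbf{Step 3: comparison.} We also have $F_{W,\ast}(-q)=\prod_i\bigl(1-\varepsilon_i(-q)^{d_i}\bigr)/(1+\varepsilon_i q)$. Comparing factor by factor:
\begin{enumerate}[label=(\roman*)]
\item for $d_i$ odd the two factors coincide;
\item for $d_i$ even the ratio is $(1+\varepsilon_iq)/(1-\varepsilon_iq)$, which equals $\frac{1+q}{1-q}$ if $\varepsilon_i=1$ and its inverse if $\varepsilon_i=-1$.
\end{enumerate}
Hence the ratio $F_{W,\dm}(q)/F_{W,\ast}(-q)$ is $\bigl(\frac{1+q}{1-q}\bigr)^{e_+-e_-}$, where $e_\pm=\#\{i: d_i\text{ even},\ \varepsilon_i=\pm1\}$.

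Finally we check that $e_+-e_-=a$. Counting with Steps~1 and~2,
\[
r_\ast(S)+r_\dm(S)=2e_++\#\{i:d_i\text{ odd}\}=2e_++n-e_+-e_-,
\]
so $a=r_\ast(S)+r_\dm(S)-n=e_+-e_-$, as required.

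\textbf{Main obstacle.} The delicate input is Step~1, specifically the identification of the invariant eigenvalues $\varepsilon_i$ with the eigenvalues of $\ast$ on $V$, together with the product formula itself. If citing Steinberg is not acceptable, I would prove the identification by comparing the graded characters of $\ast$ on $\mathbb R[V]=\mathbb R[V]^W\otimes H$, where $H$ is the space of harmonics. Alternatively, one can use the $\ast$-equivariant isomorphism $V^*\cong$ degree-one part of $\Omega^1$-type invariants (Solomon's theorem), which matches the $df_i$ with $V^*\otimes\mathbb R[V]^W$ equivariantly. Once this is in hand, the rest is the bookkeeping above.
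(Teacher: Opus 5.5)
Your proof is correct and follows essentially the paper's route: Steinberg's product formula for $F_{W,\delta}(q)$, together with $\dm=-w_0\circ\ast$ on $V$, which turns the eigenvalue $\epsilon_i$ on a basic invariant into $(-1)^{d_i}\epsilon_i$. The only difference is that you pair each denominator factor with an $\epsilon_i$, which forces you to identify the multiset $\{\epsilon_i\}$ with the eigenvalues of $\ast$ on $V$. The paper instead keeps the denominators unpaired as $(1-q)^{r_\ast(S)}(1+q)^{|S|-r_\ast(S)}$ and $(1-q)^{r_\dm(S)}(1+q)^{|S|-r_\dm(S)}$, so the exponent comparison is immediate and your ``main obstacle'' never arises. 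In any case, that identification follows from Steinberg's formula itself: $F_{W,\ast}(1)=|F_{W,\ast}|\neq 0$ forces the orders of vanishing at $q=1$ of numerator and denominator to agree.
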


\begin{proof}
According to Steinberg \cite[Theorem~2.1]{Steinberg1968},
for a finite Coxeter group $W$ and an $S$-preserving automorphism
$\delta$ of $W$
$$
F_{W,\delta}(q)=\prod_{i=1}^n\frac{1-\epsilon_i q^{d_i}}{1-\epsilon_{0,i}q},
$$
where the $d_i$ are the degrees of homogeneous basic
invariants $f_i$ chosen so that $\delta(f_i)=\epsilon_if_i$, and where
the $\epsilon_{0,i}$ are the eigenvalues of $\delta$ acting on $V$.

Since $\ast$ is an involution, its eigenvalues
when acting on $V$ are $1$ with multiplicity $r_{\ast}(S)$ and $-1$ with
multiplicity $|S|-r_{\ast}(S)$. Likewise, the eigenvalues of $\dm$ when
acting on $V$ are $1$ with multiplicity $r_{\dm}(S)$ and $-1$ with
multiplicity $|S|-r_{\dm}(S)$. Furthermore,
since $\dm=-w_0\circ\ast$ and $w_0$ acts trivially on the
space of $W$-invariant polynomials, if $\ast(f_i)=\epsilon_if_i$, then
$\dm(f_i)=(-1)^{d_i}\epsilon_if_i$.
We therefore obtain
\begin{align*}
F_{W,\dm}(q)&=
\frac{\prod_{i=1}^n\left({1-\epsilon_i(-q)^{d_i}}\right)}
{(1-q)^{r_{\dm}(S)}(1+q)^{|S|-r_{\dm}(S)}}\\
&=\left(\frac{1+q}{1-q}\right)^a
\frac{\prod_{i=1}^n{1-\epsilon_i(-q)^{d_i}}}{(1+(-q))^{|S|-r_{\ast}(S)}(1-(-q))^{r_{\ast}(S)}}
=\left(\frac{1+q}{1-q}\right)^aF_{W,{\ast}}(-q).
\end{align*}
\end{proof}

\section{Lusztig's power-series identity}\label{sec:lusztig}

Lusztig \cite{Lusztig2012} introduced the formal power series
$$
L_{W,\ast}(q):=
\sum_{z\in\Inv_{W,\ast}}q^{\ell(z)}\left(\frac{q-1}{q+1}\right)^{\ell^{\ast}(z)}
$$
and proved that, if W is finite,
\begin{equation}\label{eq:lusztig}
L_{W,\ast}(q)=\frac{W(q^2)}{F_{W,\ast}(q)}.
\end{equation}
This result was subsequently extended to affine Coxeter groups of type
$\widetilde{A}_n$ by Marberg and White \cite{MarbergWhite2017} for $\ast=\id$
and to arbitrary Coxeter groups of finite rank by Lusztig \cite{Lusztig2015}.

In this section we will give an alternative proof of Lusztig's identity.
First, we observe that
$$
L_{W,\ast}(q)=\sum_k\left(\frac{q-1}{q+1}\right)^k\cc_k(q),
$$
where each $\cc_k=\{z\in\Inv_{W,\ast}:\ell^{\ast}(z)=k\}$ is closed under
$\ast$-twisted conjugation.

\begin{proposition}\label{prop:lusztig-infinite}
Suppose that identity \eqref{eq:lusztig} holds for all finite Coxeter systems $(W,S)$.
Then the identity also holds for all infinite Coxeter systems $(W,S)$ of finite rank.
\end{proposition}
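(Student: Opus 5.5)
We argue by induction on $|S|$, and we use the recurrence \eqref{eq:main-infinite-proper} together with Corollary~\ref{cor:fwstar}. The base of the induction is supplied by the hypothesis: every finite $W_I$ satisfies \eqref{eq:lusztig}. So assume $W$ is infinite and that \eqref{eq:lusztig} holds for every proper $\ast$-stable parabolic $W_I$, finite or infinite. (The proper parabolics have smaller rank, so the infinite ones are covered by the induction hypothesis.)

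\textbf{Step 1: apply the main recurrence to each layer.} Put $u=\frac{q-1}{q+1}$ and decompose
$$
L_{W,\ast}(q)=\sum_k u^k\cc_k(q),
\qquad \cc_k=\{z\in\Inv_{W,\ast}:\ell^{\ast}(z)=k\}.
$$
Each $\cc_k$ is closed under $\ast$-twisted conjugation. Because $\ell^{\ast}$ is compatible with restriction to parabolic subgroups, we have
$$
(\cc_k)_I=\{z\in\Inv_{W_I,\ast}:\ell^{\ast}(z)=k\}.
$$
Applying \eqref{eq:main-infinite-proper} to each $\cc_k$, multiplying by $u^k$ and summing over $k$ gives
$$
\frac{L_{W,\ast}(q)}{W(q^2)}=\sum_{I\subsetneq S,\,I=I^{\ast}}(-1)^{r_{\ast}(S\setminus I)-1}\frac{L_{W_I,\ast}(q)}{W_I(q^2)}.
$$

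\textbf{Step 2: justify the summation over $k$.} The sum over $k$ must be legitimate as an identity of formal power series in $q$. The potential problem is that $u=-1+2q-\cdots$ is not in the maximal ideal, so $u^k$ does not become small as $k$ grows. Two facts make the sum harmless. First, $\ell^{\ast}(z)\le |S|$, since it is bounded by the dimension of $V$. Hence only finitely many $k$ occur, and the sum is finite. Second, each $\cc_k(q)$ is itself a formal power series, since there are finitely many elements of each length. Together these mean that each coefficient of $q^n$ involves only finitely many terms, so interchanging sums is fine. I expect this to be the only delicate point: one must make sure the recurrence is used coefficientwise in $q$, with $u$ treated as a fixed power series.

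\textbf{Step 3: substitute the induction hypothesis.} For each proper $\ast$-stable $I$, the induction hypothesis gives
$$
L_{W_I,\ast}(q)/W_I(q^2)=1/F_{W_I,\ast}(q).
$$
The right-hand side of Step 1 therefore becomes
$$
\sum_{I\subsetneq S,\,I=I^{\ast}}(-1)^{r_{\ast}(S\setminus I)-1}\frac{1}{F_{W_I,\ast}(q)}.
$$
By Corollary~\ref{cor:fwstar} for infinite $W$, this sum equals $1/F_{W,\ast}(q)$. Multiplying through by $W(q^2)$ then gives
$$
L_{W,\ast}(q)=\frac{W(q^2)}{F_{W,\ast}(q)},
$$
which is \eqref{eq:lusztig} for $W$.
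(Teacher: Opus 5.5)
Your proposal is correct and matches the paper's proof: induction on $|S|$, applying \eqref{eq:main-infinite-proper} to each layer $\cc_k$, substituting the induction hypothesis for the proper $\ast$-stable parabolics, and finishing with the infinite case of Corollary~\ref{cor:fwstar}. Your Step~2 makes explicit a point the paper leaves implicit: $\ell^{\ast}$ takes only finitely many values, so the sum over $k$ is finite and can be handled coefficientwise in $q$.
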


\begin{proof}
Let $(W,S)$ be an infinite Coxeter system of finite rank.
We use induction on $|S|$ and may therefore assume that
$$
\frac{L_{W_I,\ast}(q)}{W_I(q^2)}=\frac{1}{F_{W_I,\ast}(q)}
$$
holds for all parabolic subgroups $W_I$ with $I\subsetneq S$ and $I=I^{\ast}$.
Hence, by Theorem~\ref{thm:main}, applied to each term $\cc_k(q)$,
and Corollary~\ref{cor:fwstar} we have
\begin{align*}
\frac{L_{W,\ast}(q)}{W(q^2)}
&=\sum_k\left(\frac{q-1}{q+1}\right)^k\frac{\cc_k(q)}{W(q^2)}
=\sum_{I\subsetneq S,I=I^{\ast}}(-1)^{r_{\ast}(S\setminus I)-1}
  \sum_k\left(\frac{q-1}{q+1}\right)^k\frac{\left(\cc_k\cap W_I\right)(q)}{W_I(q^2)}\\
&=\sum_{I\subsetneq S,I=I^{\ast}}(-1)^{r_{\ast}(S\setminus I)-1}
\frac{L_{W_I,\ast}(q)}{W_I(q^2)}
=\sum_{I\subsetneq S,I=I^{\ast}}\frac{(-1)^{r_{\ast}(S\setminus I)-1}}{F_{W_I,\ast}(q)}
=\frac{1}{F_{W,\ast}(q)},
\end{align*}
which is \eqref{eq:lusztig}.
\end{proof}

We now put $t:=t(q)=(q-1)/(q+1)$ and $a:=r_{\ast}(S)+r_{\dm}(S)-|S|$.
Interchanging $\ast$ and $\dm$, Proposition~\ref{prop:fw-opposition}
becomes
\begin{equation}\label{eq:fwrecip}
F_{W,\dm}(-q)=(-t)^aF_{W,\ast}(q).
\end{equation}

\begin{proposition}\label{prop:lusztig-finite}
Identity \eqref{eq:lusztig} holds for all finite Coxeter systems $(W,S)$.
\end{proposition}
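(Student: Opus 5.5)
We argue by induction on $|S|$, using only the recurrence \eqref{eq:main-finite} for the sets $\cc_k=\{z\in\Inv_{W,\ast}:\ell^{\ast}(z)=k\}$ and the opposition results of Section~\ref{sec:opposition}. The recurrence will give one linear relation between $L_{W,\ast}(q)$ and $L_{W,\dm}(-q)$. The same argument with $\ast$ and $\dm$ interchanged gives a second relation. We then solve this $2\times 2$ system. The base case $S=\varnothing$ is trivial, since both sides of \eqref{eq:lusztig} equal $1$.

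\emph{Step 1: the relation for $\ast$.} Multiply \eqref{eq:main-finite}, applied to $\cc_k$, by $t^k$ and sum over $k$. On the right-hand side every proper $\ast$-stable $I$ gives $\sum_k t^k(\cc_k\cap W_I)(q)/W_I(q^2)=L_{W_I,\ast}(q)/W_I(q^2)$. This holds because $\ell^{\ast}$ is compatible with restriction to parabolic subgroups. By induction this equals $1/F_{W_I,\ast}(q)$, and then Corollary~\ref{cor:fwstar} turns the whole right-hand side into $(1+(-1)^{r_{\ast}(S)-1}q^N)/F_{W,\ast}(q)$. On the left-hand side we obtain $L_{W,\ast}(q)+(-1)^{r_{\ast}(S)-1}q^{2N}M(q)$, where $M(q)=\sum_{z}(-q)^{-\ell(z)}t^{\ell^{\ast}(z)}$.

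\emph{Step 2: eliminating $M$ via opposition.} Note that $t(-q)=1/t$. The bijection $z\mapsto zw_0$ of Proposition~\ref{prop:stardiamond} sends $\ell$ to $N-\ell$ and $\ell^{\ast}$ to $a-\ell^{\ast}$. Hence
$$L_{W,\dm}(-q)=\sum_z(-q)^{N-\ell(z)}t^{\ell^{\ast}(z)-a}=(-q)^Nt^{-a}M(q).$$
So Step 1 becomes
\begin{equation*}
L_{W,\ast}(q)+(-1)^{r_{\ast}(S)-1}q^{2N}(-q)^{-N}t^{a}L_{W,\dm}(-q)=W(q^2)\,\frac{1+(-1)^{r_{\ast}(S)-1}q^N}{F_{W,\ast}(q)}.
\end{equation*}
Next we repeat Steps 1 and 2 with $\ast$ and $\dm$ swapped. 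This is legitimate because the induction hypothesis covers all $S$-preserving involutions of smaller rank, and $a$ is symmetric in $\ast,\dm$. Substituting $q\mapsto -q$ then gives a second linear equation in the same two unknowns $L_{W,\ast}(q)$ and $L_{W,\dm}(-q)$. Its off-diagonal coefficient is $\pm q^{N}t^{-a}$.

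\emph{Step 3: solving the system.} The product of the two off-diagonal coefficients is $\pm q^{2N}$, because the powers of $t$ cancel. The determinant is therefore $1\pm q^{2N}$, which is invertible as a formal power series, so the system has a unique solution. It remains to check that $L_{W,\ast}(q)=W(q^2)/F_{W,\ast}(q)$ and $L_{W,\dm}(-q)=W(q^2)/F_{W,\dm}(-q)$ satisfy both equations. For this, substitute \eqref{eq:fwrecip}, $F_{W,\dm}(-q)=(-t)^aF_{W,\ast}(q)$. Using $a\equiv N\pmod 2$ and $a\equiv r_{\ast}(S)+r_{\dm}(S)-|S|$, each equation reduces to a sign identity between $(-1)^{r_{\ast}(S)-1}q^N$ and the corresponding term carrying $r_{\dm}(S)$.

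The main obstacle is this final sign bookkeeping: tracking $(-1)^N$, $(-1)^a$, $(-t)^a$ versus $t^a$, and the parities of $r_{\ast}(S)$ and $r_{\dm}(S)$ so that both equations close exactly. I would carry it out first for the $\ast$-equation, where the candidate reduces to $1+(-1)^{r_{\ast}(S)-1}q^N$ after using $(-1)^{a}(-q)^{-N}q^{2N}=q^N$. The second equation then follows by symmetry.
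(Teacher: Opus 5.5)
Your proposal is correct and follows the paper's proof essentially step for step. You sum the finite recurrence \eqref{eq:main-finite} over the classes $\cc_k$ with weights $t^k$, close the right-hand side by induction and Corollary~\ref{cor:fwstar}, and rewrite the reflected term as $(-q)^{-N}t^aL_{W,\dm}(-q)$ via the opposition bijection (Corollary~\ref{cor:bivar}); you then pair this with the $\dm$-equation at $-q$ and check the candidate solution using \eqref{eq:fwrecip} and $a\equiv N \pmod 2$. One small point: each equation closes on its own (the first involves only $(-1)^{r_{\ast}(S)-1}$, the second only $(-1)^{r_{\dm}(S)-1}$), so you never need a sign identity linking the parities of $r_{\ast}(S)$ and $r_{\dm}(S)$.
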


\begin{proof}
We prove the identity simultaneously for all involutive automorphisms
by induction on $|S|$. For the trivial group $W=\{e\}$ we have
$L_{W,\ast}(q)=W(q^2)=F_{W,\ast}(q)=1$ and thus the identity holds for $|S|=0$.

Now assume that equation~\eqref{eq:lusztig} holds for all finite Coxeter systems
of rank smaller than $|S|$.
Let $a=r_{\ast}(S)+r_{\dm}(S)-|S|$ and $\epsilon_{\ast}=(-1)^{r_{\ast}(S)-1}$.
Applying the finite cases of Theorem~\ref{thm:main} and Corollary~\ref{cor:fwstar},
we obtain
$$
\frac{L_{W,\ast}(q)+\epsilon_{\ast}q^{2N}B_{W,{\ast}}(-q^{-1},t)}{W(q^2)}=
\frac{1+\epsilon_{\ast}q^N}{F_{W,\ast}(q)}.
$$
Equivalently, setting $H_{\ast}(q)=W(q^2)/F_{W,\ast}(q)$, we obtain
$$
L_{W,\ast}(q)+\epsilon_{\ast}q^{2N}B_{W,\ast}(-q^{-1},t)=(1+\epsilon_{\ast}q^N)H_{\ast}(q).
$$
Applying Corollary~\ref{cor:bivar} and using $t(-q)=1/t(q)$ yields
$$
B_{W,\ast}(-q^{-1},t)=(-q)^{-N}t^aB_{W,\dm}(-q,t(-q))=(-1)^Nq^{-N}t^aL_{W,\dm}(-q).
$$
We therefore obtain
\begin{equation}\label{eq:lineq1}
L_{W,\ast}(q)+\epsilon_{\ast}(-1)^Nq^Nt^aL_{W,\dm}(-q)=(1+\epsilon_{\ast}q^N)H_{\ast}(q),
\end{equation}
which is a first linear equation relating $L_{W,\ast}(q)$ and $L_{W,\dm}(-q)$.
To obtain a second linear equation in the same two unknowns,
we interchange $\ast$ and $\dm$. Making the substitution $q\mapsto -q$ and
using $t(-q)=1/t(q)$, we obtain
\begin{equation}\label{eq:lineq2}
\epsilon_{\dm}q^Nt^{-a}L_{W,\ast}(q)+L_{W,\dm}(-q)=(1+\epsilon_{\dm}(-1)^Nq^N)H_{\dm}(-q).
\end{equation}
The determinant of this system of two linear equations \eqref{eq:lineq1}
and \eqref{eq:lineq2} is
$$
1-\epsilon_{\ast}\epsilon_{\dm}(-1)^Nq^{2N},
$$
which is a nonzero polynomial in $q$. This means that the system has a unique
solution. We claim that it is $(L_{W,\ast}(q),L_{W,\dm}(-q))=(H_{\ast}(q),H_{\dm}(-q))$.
To verify \eqref{eq:lineq1}, we use \eqref{eq:fwrecip} to replace $H_{\dm}(-q)$
with $(-1)^{-a}t^{-a}H_{\ast}(q)$ and note that $a\equiv N\mod 2$:
\begin{align*}
L_{W,\ast}(q)+\epsilon_{\ast}(-1)^Nq^Nt^aL_{W,\dm}(-q)&=
H_{\ast}(q)+\epsilon_{\ast}(-1)^Nq^Nt^aH_{\dm}(-q)\\
&=H_{\ast}(q)+\epsilon_{\ast}q^NH_{\ast}(q)=(1+\epsilon_{\ast}q^N)H_{\ast}(q).
\end{align*}
To verify \eqref{eq:lineq2}, we use \eqref{eq:fwrecip} to replace $H_{\ast}(q)$
with $(-1)^at^aH_{\dm}(-q)$:
\begin{align*}
\epsilon_{\dm}q^Nt^{-a}L_{W,\ast}(q)+&L_{W,\dm}(-q)=\epsilon_{\dm}q^Nt^{-a}H_{\ast}(q)+H_{\dm}(-q)\\
&=\epsilon_{\dm}(-1)^aq^NH_{\dm}(-q)+H_{\dm}(-q)
=(1+\epsilon_{\dm}(-1)^Nq^N)H_{\dm}(-q).
\end{align*}
Hence $L_{W,\ast}(q)=H_{\ast}(q)=W(q^2)/F_{W,\ast}(q)$, which completes the induction step.
\end{proof}

\section{Twisted conjugacy classes}\label{sec:classification}

This section describes some aspects of the practical use of the
recurrence relations to compute the
length-generating function for a specific twisted conjugacy class
of twisted involutions.
As we observed in Remark~\hyperref[it:class]{\ref*{rem:cancellation}\ref*{it:class}},
each such twisted
conjugacy class $\cc$ of a Coxeter group $W$ contains an element which is
the longest element of a spherical parabolic subgroup $W_I$ with $I=I^{\ast}$.
A twisted conjugacy class $\cc$ can therefore be specified by means of a
$*$-stable spherical subset $I\subseteq S$. We note that this input enters
the recurrence computation through the possible cancellation of the leading
coefficient of $\cc_I(q)$ (see
Remark~\hyperref[it:first]{\ref*{rem:cancellation}\ref*{it:first}}).
To correct for these cancellations, we need criteria for determining for
which other spherical $*$-stable subset $J$ the longest element of $W_J$ is
$*$-twisted conjugate to the longest element of $W_I$.

Insofar as this section relates to the
classification of twisted involutions, the results are not new;
see Richardson \cite{Ric82} for the classification of ordinary involutions,
He \cite{He07} for extensions to the twisted case, and
Marquis \cite{Marquis25} for a full description of the (twisted) conjugacy
classes of an arbitrary Coxeter group.

We write $w_I$ for the longest element of a spherical parabolic subgroup $W_I$,
let $\omega_I:W_I\to W_I$ denote the opposition automorphism of $W_I$
defined by $w\mapsto w_Iww_I$, and set $\dm_I:=\omega_I\circ\ast|_{W_I}$.
Note that, in general, $\omega_J|_{W_I}\neq \omega_I$ for spherical subsets
$I\subsetneq J$. However, if $I$ is an irreducible component of $J$, then
$\omega_J|_{W_I}=\omega_I$ because the opposition involution on $J$ is
determined by the opposition
involution on each irreducible graph component of $J$.

Let $I$ and $J$ be two distinct $\ast$-stable subsets with $K:=I\cup J$ spherical.
Note that if $\omega_K(I)=J$, by uniqueness
of the longest element we have $\omega_K(w_I)=w_J$, which means that $w_I$ and
$w_J$ are $\ast$-twisted conjugate via the longest element $w_K$ of $K$.
If $J\setminus I$ consists of a single $\ast$-orbit, we call the transition
from $I$ to $\omega_K(I)=J$ an \emph{elementary move}.

We will use the following result due to Marquis \cite{Marquis25}.
It generalizes a result originally proved by Deodhar \cite{Deo82},
and in the finite case by Howlett \cite{How80}, to the twisted case.
\begin{lemma}\label{lem:marquis}
Let $I$ and $J$ be $\ast$-stable spherical subsets of $S$. Suppose there
exists $x=x^{\ast}\in W$ that has minimal length in $W_JxW_I$ and satisfies
$x(\Pi_I)=\Pi_J$. Then there exists a sequence $I=I_0,I_1,\dots,I_m=J$
of $*$-stable subsets $I_i$ of $S$ and elements $s_1,\dots,s_m\in S$
such that for $1\le i\le m$ we have
\begin{enumerate}[label=(\roman*),ref=(\roman*)]
\item
$I_i\setminus I_{i-1}=\{s_i,s_i^{\ast}\}$ (possibly with $s_i=s_i^{\ast}$),
\item
$K_i:=I_{i-1}\cup I_i$ is spherical, and
\item
$I_i=\omega_{K_i}(I_{i-1})$.
\end{enumerate}
\end{lemma}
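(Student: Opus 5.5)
We argue by induction on $\ell(x)$, following Deodhar's strategy for the untwisted case, with a $\ast$-stable orbit $\{s,s^{\ast}\}$ in place of a single simple reflection. If $\ell(x)=0$ then $x=e$, so $\Pi_I=\Pi_J$ and $I=J$, and we take $m=0$.

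Now let $x\neq e$ and choose $s\in\DR(x)$. Since $x$ is minimal in $W_JxW_I$, we have $x\in W^I$ and hence $s\notin I$. Since $x=x^{\ast}$ and $\ast$ preserves length, also $s^{\ast}\in\DR(x)$ and $s^{\ast}\notin I$. Put $K:=I\cup\{s,s^{\ast}\}$, which is $\ast$-stable.

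The key claim is that every root in $\Phi_K^+\setminus\Phi_I^+$ is sent by $x$ to a negative root. Such a root is $\beta=\sum_{t\in I}c_t\alpha_t+c_s\alpha_s+c_{s^{\ast}}\alpha_{s^{\ast}}$ with $c_s+c_{s^{\ast}}>0$. Applying $x$ gives
\begin{equation*}
x(\beta)=\sum_{t\in I}c_t\,x(\alpha_t)+c_s\,x(\alpha_s)+c_{s^{\ast}}\,x(\alpha_{s^{\ast}}),
\end{equation*}
where each $x(\alpha_t)$ with $t\in I$ lies in $\Pi_J$, while $x(\alpha_s)$ and $x(\alpha_{s^{\ast}})$ are negative. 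The latter do not lie in $\Phi_J$, because $x\in{}^JW$ forces $x^{-1}(\Phi_J^+)\subseteq\Phi^+$. Hence $x(\beta)$ has a strictly negative coefficient at some simple root outside $J$. Since every root is either positive or negative, $x(\beta)\in\Phi^-$.

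This step is the main obstacle. I would first try to make it rigorous using the support of $x(\alpha_s)$ and $x(\alpha_{s^{\ast}})$ outside $J$; failing that, I would fall back on a descent argument showing $\ell(x)=\ell(xy^{-1})+\ell(y)$ for $y:=w_Kw_I$.

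Granting the claim, $\Phi_K^+\setminus\Phi_I^+$ is contained in the finite inversion set of $x$. Since $I$ is spherical, $\Phi_I^+$ is finite, so $\Phi_K$ is finite and $K$ is spherical.

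Next set $y:=w_Kw_I$ and $I_1:=\omega_K(I)$. Then $y(\Pi_I)=\Pi_{I_1}$, and $y=y^{\ast}$ because $K$ and $I$ are $\ast$-stable. Moreover $I_1$ is $\ast$-stable, $I_1\setminus I=\{s,s^{\ast}\}$ by a cardinality count, and $I_0\cup I_1=K$ is spherical. The inversion set of $y^{-1}$ is exactly $\Phi_K^+\setminus\Phi_I^+$, which by the claim is contained in the inversion set of $x$. This gives $\ell(x)=\ell(x')+\ell(y)$ for $x':=xy^{-1}$, so $\ell(x')<\ell(x)$.

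The element $x'$ satisfies $x'^{\ast}=x'$ and $x'(\Pi_{I_1})=\Pi_J$. It is minimal in $W_Jx'W_{I_1}$: first, $x'(\Pi_{I_1})\subseteq\Phi^+$ gives $x'\in W^{I_1}$; second, $x'^{-1}(\Pi_J)=\Pi_{I_1}\subseteq\Phi^+$ gives $x'\in{}^JW$. By the induction hypothesis there is a chain of moves from $I_1$ to $J$ satisfying (i)--(iii). Prepending the move $I_0=I\to I_1$ with $s_1=s$ and $K_1=K$ completes the induction.
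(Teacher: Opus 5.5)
Your route differs from the paper's. The paper gives no argument of its own: it invokes Marquis's Proposition~4.11 and deletes repetitions from the sequence produced there. You instead give a self-contained Deodhar-style induction on $\ell(x)$, using $x=x^{\ast}$ to get a whole $\ast$-orbit $\{s,s^{\ast}\}\subseteq\DR(x)$, so that $K$, $w_K$, $y=w_Kw_I$ and $x'=xy^{-1}$ stay $\ast$-compatible. This buys an elementary proof in the paper's own root-system language; the citation is shorter but hides the mechanism.

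The step you flag as the main obstacle is already complete. Outside $J$, the coefficients of $x(\beta)$ are those of $c_sx(\alpha_s)+c_{s^{\ast}}x(\alpha_{s^{\ast}})$, so they are all $\le 0$. One of them is $<0$, because a negative root not in $\Phi_J$ must have a nonzero coefficient outside $J$. All you need is the standard fact $\Phi\cap\operatorname{span}(\Pi_J)=\Phi_J$, which you also use implicitly to get $c_s+c_{s^{\ast}}>0$. The remaining checks ($y(\Pi_I)=\Pi_{I_1}$, $y=y^{\ast}$, $x'=x'^{\ast}$, $x'\in{}^JW^{I_1}$, and $I_1$ spherical and $\ast$-stable) are fine.

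One step is false as stated: $I_1\setminus I=\{s,s^{\ast}\}$ does not follow from a cardinality count. Since $I$ and $I_1$ are $\ast$-stable and $I_1\subseteq K$, you only get that $I_1\setminus I$ is a $\ast$-stable subset of the single orbit $\{s,s^{\ast}\}$. So it is either $\varnothing$ or $\{s,s^{\ast}\}$, and the first case does occur. Take $S=\{s,t\}$ with $m_{st}=2$, $\ast=\id$, $I=J=\{t\}$ and $x=s$. All hypotheses hold, $\DR(x)=\{s\}$, $K=S$, $w_K=st$ and $\omega_K(I)=I$, so prepending $I_0\to I_1$ violates (i).

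The repair is immediate. You still have $\ell(y)=|\Phi_K^+\setminus\Phi_I^+|\ge 1$, since $\alpha_s$ lies in this set. So $x'$ is shorter and satisfies the hypotheses for the same pair $(I,J)$, and you apply the induction hypothesis without recording a step. This is exactly the ``deleting repetitions'' in the paper's proof. When $I_1\neq I$, your conclusions $I_1\setminus I=\{s,s^{\ast}\}$ and $I_0\cup I_1=K$ are correct.

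There is also a smaller slip in conventions. Take $N(w)=\{\beta\in\Phi^+:w(\beta)\in\Phi^-\}$, the convention in which your claim reads $\Phi_K^+\setminus\Phi_I^+\subseteq N(x)$. Then it is $N(y)$, not $N(y^{-1})$, that equals $\Phi_K^+\setminus\Phi_I^+$; in fact $N(y^{-1})=\Phi_K^+\setminus\Phi_{I_1}^+$. The inclusion $N(y)\subseteq N(x)$ is what gives $\ell(x)=\ell(xy^{-1})+\ell(y)$, so your conclusion is unaffected.
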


\begin{proof}
This follows from \cite{Marquis25}, Proposition~4.11, after deleting any
repetitions from the sequence occurring there.
\end{proof}

We let $\ell^{\ast}(\cc)$ denote the twisted absolute length of an element of $\cc$.
Since $\ell^{\ast}$ is constant on $\ast$-twisted conjugacy classes, this is
well defined. By Proposition~\ref{prop:stardiamond} applied to $(W_I,I)$,
$\ell^{\ast}(w_I)=r_{\ast}(I)+r_{\dm_I}(I)-|I|$ for any $\ast$-stable
$I\subseteq S$.
We say that a $\ast$-stable $I\subseteq S$ is
\emph{inclusion-minimal for $\cc$} if $\cc\cap W_I\neq\varnothing$
and $\cc\cap W_{I'}=\varnothing$ for all $*$-stable $I'\subsetneq I$.

\begin{proposition}\label{prop:classif}
Let $(W,S)$ be a Coxeter system of finite rank and let $\cc$ be
a $\ast$-twisted conjugacy class of twisted involutions. Let $I$ denote a
$\ast$-stable subset of $S$.
\begin{enumerate}[label=(\roman*),ref=(\roman*)]
\item\label{it:class-minimal}
If $I$ is inclusion-minimal for $\cc$, then $I$ is spherical,
$\cc\cap W_I=\{w_I\}$ and $\ast|_{W_I}$ induces the opposition involution
on $I$ or, equivalently, $r_{\dm_I}(I)=|I|$. Conversely, if $I$ is
spherical and $r_{\dm_I}(I)=|I|$, then $I$ is inclusion-minimal for
the $*$-twisted conjugacy class of $w_I$.
\item\label{it:class-moves}
If $I$ is inclusion-minimal for $\cc$, then so is any subset of $S$ obtained
from $I$ by an elementary move. Moreover, if $J\subseteq S$ is another
$\ast$-stable inclusion-minimal subset for $\cc$, then $J$ can be
obtained from $I$ by a series of elementary moves.
\item\label{it:class-superset}
Let $I$ be inclusion-minimal for $\cc$ and let $J\subseteq S$ be $\ast$-stable
and spherical with $I\subseteq J$. Then $w_J$ is $\ast$-twisted conjugate
to $w_I$ if and only if $r_{\ast}(J)+r_{\dm_J}(J)-|J|=r_{\ast}(I)$.
\end{enumerate}
\end{proposition}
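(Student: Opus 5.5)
For \ref{it:class-minimal}, I will use Remark~\ref{rem:cancellation}\ref{it:class}. Applied to $(W_I,I)$ with the class $\cc_I$, it gives that $W_I$ is finite, $\cc\cap W_I=\{w_I\}$, and $r_{\ast}(I)\equiv\ell(w_I)\bmod 2$. This class is closed under twisted conjugation in $W_I$, since $\cc_I$ is a union of $\ast$-twisted classes of $W_I$ and minimality forces $\cc_{I'}=\varnothing$ for $I'\subsetneq I$.

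Next I will show that $\ell^{\ast}(w_I)=r_{\ast}(I)$, equivalently $r_{\dm_I}(I)=|I|$. Proposition~\ref{prop:stardiamond} applied to $W_I$ sends $w_I$ to $e\in\Inv_{W_I,\dm_I}$. Because $\cc_I=\{w_I\}$ is a single $\ast$-twisted class of $W_I$, the class of $e$ under $\dm_I$-twisted conjugation is $\{e\}$. That class is $\{x^{-1}x^{\dm_I}\}$, so $x^{\dm_I}=x$ for all $x\in W_I$. Hence $\dm_I=\id$ on $I$, i.e. $\ast|_I=\omega_I$, and $r_{\dm_I}(I)=|I|$.

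For the converse, assume $r_{\dm_I}(I)=|I|$, so $\dm_I=\id$. Then the $\ast$-class of $w_I$ in $W_I$ corresponds to the $\id$-class $\{e\}$, so $\cc\cap W_I=\{w_I\}$. Suppose $I'\subsetneq I$ is $\ast$-stable and some $z\in W_{I'}$ lies in $\cc$. Then $z$ is $\ast$-twisted conjugate to $w_I$ in $W$. I need conjugacy within $W_I$ to get a contradiction, and the invariant $\ell^{\ast}$ does this: $\ell^{\ast}(z)=\ell^{\ast}(w_I)=r_{\ast}(I)$. On the other hand, $\ell^{\ast}(z)\le\nu(z\circ\ast|_{V_{I'}})-\nu(\ast|_{V_{I'}})\le |I'|-(|I'|-r_\ast(I'))=r_{\ast}(I')<r_{\ast}(I)$, using compatibility with restriction. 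This is a contradiction.

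For \ref{it:class-moves}, let $J=\omega_K(I)$ be an elementary move. Then $w_J=w_Kw_Iw_K$ with $w_K^\ast=w_K$, because $K$ is $\ast$-stable. So $w_J\in\cc$. The map $\omega_K$ restricts to an isomorphism $W_I\to W_J$ that commutes with $\ast$, since $\omega_K$ is central in $\mathrm{Aut}$ of the graph of $K$. It therefore transports $r_{\dm_I}(I)=|I|$ to $J$, and the converse in \ref{it:class-minimal} gives minimality of $J$. For the second claim, take $w_J=x^{-1}w_Ix^{\ast}$ and replace $x$ by the minimal double coset representative. I expect this to be the main obstacle: producing $x=x^{\ast}$ with $x(\Pi_I)=\Pi_J$ so that Lemma~\ref{lem:marquis} applies. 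I would try Marquis's results directly, or a length argument that minimal twisted conjugators between longest elements normalize the root subsystems. Lemma~\ref{lem:marquis} then gives a chain, and each step is an elementary move.

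For \ref{it:class-superset}, necessity follows because $\ell^{\ast}$ is a class invariant: $\ell^{\ast}(w_J)=r_{\ast}(J)+r_{\dm_J}(J)-|J|$ and $\ell^{\ast}(w_I)=r_{\ast}(I)$. For sufficiency, I would pass to $W_J$ and map by Proposition~\ref{prop:stardiamond} to $\dm_J$-twisted involutions. There $w_J\mapsto e$ and $w_I\mapsto w_Iw_J$, which has $\ell^{\dm_J}$ equal to $a-r_\ast(I)=0$ under the hypothesis. Elements with twisted absolute length $0$ should be exactly the twisted class of $e$, namely $\{x^{-1}x^{\dm_J}\}$. I would prove this via Hultman's characterization, or by noting that $\nu(z\circ\dm)=\nu(\dm)$ forces the class to be that of the identity via Lusztig/Springer theory in finite groups. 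Twisted conjugacy in $W_J\subseteq W$ then finishes the proof.
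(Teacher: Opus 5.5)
\textbf{Gap in the second half of (ii).} This part is not actually proved. You correctly identify what Lemma~\ref{lem:marquis} needs: a conjugator $x=x^{\ast}$, minimal in its double coset, with $x(\Pi_I)=\Pi_J$. But you only point to ``Marquis's results or a length argument'', and even the passage to the minimal double-coset representative needs justification. The missing argument uses (i), as follows.
\begin{enumerate}[label=(\alph*)]
\item Since $\cc\cap W_I=\{w_I\}$ and $\cc\cap W_J=\{w_J\}$, one has $sw_I=w_Is^{\ast}$ for $s\in I$ and $tw_J=w_Jt^{\ast}$ for $t\in J$. So the set of $x$ with $xw_I=w_Jx^{\ast}$ is a union of double cosets $W_JxW_I$, and you may take $x\in{}^JW^I$.
\item Since $I,J$ are $\ast$-stable, $x^{\ast}$ is the minimal element of $W_Jx^{\ast}W_I$. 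But $x^{\ast}=w_Jxw_I\in W_JxW_I$, so uniqueness of the minimal element gives $x=x^{\ast}$ and $xw_I=w_Jx$.
\item For $\alpha\in\Phi^+_I$, $x\in W^I$ gives $x(\alpha)\in\Phi^+$ and $w_J(x(\alpha))=x(w_I(\alpha))\in\Phi^-$. Since $w_J$ preserves $\Phi^+\setminus\Phi^+_J$, this forces $x(\alpha)\in\Phi^+_J$. By symmetry $x(\Phi^+_I)=\Phi^+_J$, hence $x(\Pi_I)=\Pi_J$.
\end{enumerate}

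\textbf{Gap in the sufficiency direction of (iii).} This direction rests on the claim that a $\dm_J$-twisted involution $z\in W_J$ with $\ell^{\dm_J}(z)=0$ is $\dm_J$-twisted conjugate to $e$. You defer this to Hultman's characterization or to Lusztig/Springer theory without giving an argument. It follows at once from your own (i), applied to $(W_J,J)$ with $\dm_J$ in place of $\ast$. Choose $I'\subseteq J$ inclusion-minimal for the class of $z$. Then $z$ is conjugate to $w_{I'}$ and $0=\ell^{\dm_J}(w_{I'})=r_{\dm_J}(I')$, so $I'=\varnothing$.

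\textbf{The remaining parts.} With these two arguments supplied, your proof is complete, and elsewhere it differs usefully from the paper. Your converse in (i) uses the bound $\ell^{\ast}(z)\le r_{\ast}(I')<r_{\ast}(I)$ for every twisted involution $z\in W_{I'}$. This avoids choosing an inclusion-minimal subset of $I$ and invoking the forward implication, as the paper does. In the first half of (ii), you transport $\dm_I$ to $\dm_J$ along $\omega_K$, which commutes with $\ast$ since $w_K^{\ast}=w_K$. This replaces the paper's orbit count $r_{\ast}(J)=r_{\ast}(I)$. One small point: in your converse to (i), the inference ``so $\cc\cap W_I=\{w_I\}$'' does not follow from the $W_I$-class of $w_I$ being a singleton. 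You never use it, though.
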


\begin{proof}
As observed in Remark~\hyperref[it:class]{\ref*{rem:cancellation}\ref*{it:class}},
if $I$ is an inclusion-minimal subset for which $\cc_I(q)\neq 0$, then
Theorem~\ref{thm:main} implies that $W_I$ is finite and $\cc_I(q)=q^{\ell(w_I)}$.
Thus $\cc\cap W_I=\{w_I\}$. Hence for $s\in I$ we have $s^{-1}w_Is^{\ast}=w_I$
and thus $\omega_I(s)=w_Isw_I=s^{\ast}$, i.e. $\omega_I$ and $\ast$ induce the same
permutation on $I$. This is equivalent to $\dm_I=\omega_I\circ\ast|_{W_I}$ being
the identity on $I$, i.e. $r_{\dm_I}(I)=|I|$.

Conversely, suppose $I$ is spherical and $r_{\dm_I}(I)=|I|$. Then
by Proposition~\ref{prop:stardiamond}
$\ell^{\ast}(w_I)=r_{\ast}(I)$. Let $J\subseteq I$ be $\ast$-stable and
inclusion-minimal for the $\ast$-twisted conjugacy class of $w_I$. By the
first implication, $\ell^{\ast}(w_J)=r_{\ast}(J)$. Since $w_I$ and $w_J$
are $\ast$-twisted conjugate,
$r_{\ast}(J)=\ell^{\ast}(w_J)=\ell^{\ast}(w_I)=r_{\ast}(I)$.
This implies that $J=I$, and hence $I$ is inclusion-minimal.
This proves \ref{it:class-minimal}.

We already observed that, if $J$ is obtained from $I$ by an elementary
move $J=\omega_K(I)$, then $J$ is spherical and $w_J$ is $\ast$-twisted
conjugate to $w_I$.
Since $K$ is $\ast$-stable, $w_K=w_K^*$, which means that $\omega_K$
preserves $*$-orbits. Hence $\ell^{\ast}(w_J)=\ell^{\ast}(w_I)=
r_{\ast}(I)=r_{\ast}(J)$ and thus $J$ is inclusion-minimal for $\cc$ as well.

Now suppose that $I, J\subseteq S$ are two $\ast$-stable inclusion-minimal subsets
for $\cc$. Since $w_I$ and $w_J$ are $\ast$-twisted conjugate,
we can choose $x\in W$ of minimal length subject to $xw_I=w_Jx^{\ast}$.
Since $\cc\cap W_I=\{w_I\}$, for $s\in I$ we have
$(xs)w_I=xw_Is^{\ast}=w_J(xs)^{\ast}$ and thus
$\ell(xs)>\ell(x)$ for all $s\in I$ by our choice of $x$. Hence $x\in W^I$.
Likewise $\ell(tx)>\ell(x)$ for all $t\in J$ and thus $x\in {}^JW^I$.
This means that $x$ is the unique element of minimal length in $W_JxW_I$.
In the same way, and using $I=I^{\ast}$ and $J=J^{\ast}$, we find that $x^{\ast}$ is the unique
element of minimal length in $W_Jx^{\ast}W_I$. However, $x^{\ast}=w_Jxw_I$ shows that
these double cosets are the same. We therefore have $x=x^{\ast}$ and $xw_I=w_Jx$.

Note that if $K\subseteq S$ is spherical, then $w_K(\Phi_K^+)=\Phi^-_K$ and
$w_K(\Phi^+\setminus\Phi^+_K)=\Phi^+\setminus\Phi^+_K$.
Let $\alpha\in\Phi^+_I$. Then $w_I(\alpha)\in\Phi^-_I$.
Since $x\in W^I$, we have $x(\alpha_s)\in\Phi^+$ for all $s\in I$
and thus $x(\alpha)\in\Phi^+$ and $x(w_I(\alpha))\in\Phi^-$.
This means that $w_J(x(\alpha))\in\Phi^-$ and thus $x(\alpha)\in\Phi^+_J$.
Hence $x(\Phi^+_I)\subseteq\Phi^+_J$. In the same way we find that
$x^{-1}(\Phi^+_J)\subseteq\Phi^+_I$ and thus $x(\Phi^+_I)=\Phi^+_J$.
Hence $x$ maps the simple roots $\Pi_I$ onto the simple roots $\Pi_J$.
We can therefore apply Lemma~\ref{lem:marquis} to obtain a series of
elementary moves which transform $I$ into $J$.
This proves \ref{it:class-moves}.

As a preliminary to proving \ref{it:class-superset}, we assert that
a $\ast$-twisted involution $z$ with $\ell^{\ast}(z)=0$ is $\ast$-twisted
conjugate to $e$.
Indeed, choose $I\subseteq S$ $*$-stable and inclusion-minimal for the twisted
conjugacy class of $z$.
Then by \ref{it:class-minimal} $z$ is $\ast$-twisted conjugate to $w_I$
and $r_{\dm_I}(I)=|I|$.
Hence $0=\ell^{\ast}(z)=\ell^{\ast}(w_I)=r_{\ast}(I)$. This means
that $I=\varnothing$ and thus $w_I=e$.

Now let $I\subseteq J\subseteq S$ be $\ast$-stable and spherical
with $I$ inclusion-minimal for $\cc$.
Suppose first that $w_I$ and $w_J$ are $\ast$-twisted conjugate in $W$.
Since twisted absolute length is constant on twisted conjugacy classes
and compatible with parabolic subgroups, by \ref{it:class-minimal}
and Proposition~\ref{prop:stardiamond} applied to $(W_J,J)$ we have
$r_{\ast}(I)=\ell^{\ast}(w_I)=\ell^{\ast}(w_J)=r_{\ast}(J)+r_{\dm_J}(J)-|J|$.

Conversely, suppose that $r_{\ast}(J)+r_{\dm_J}(J)-|J|=r_{\ast}(I)$.
Again by Proposition~\ref{prop:stardiamond} applied to $(W_J,J)$,
we know that $w_Iw_J$ is a $\dm_J$-twisted involution and that
$\ell^{\dm_J}(w_Iw_J)=r_{\ast}(J)+r_{\dm_J}(J)-|J|-\ell^{\ast}(w_I)=0$.
By the preceding observation, $w_Iw_J$ is therefore
$\dm_J$-twisted conjugate to $e$ in $W_J$. Applying once more
Proposition~\ref{prop:stardiamond}, we find that $w_I$ and $w_J$
are $\ast$-twisted conjugate in $W_J$ and hence also in $W$.
This proves \ref{it:class-superset}.
\end{proof}

The spherical subsets $I\subseteq S$ which are
inclusion-minimal for a conjugacy class in the nontwisted
case coincide with the subsets
satisfying Richardson's \cite{Ric82} $(-1)$-condition.
We will continue to call such subsets \emph{inclusion-minimal},
with the $\ast$-twisted conjugacy class of $w_I$ understood.

In the nontwisted case, for each spherical
$I\subseteq S$ the automorphism $\dm_I$ corresponds
to conjugation by $w_I$, which induces the opposition involution on
$I$. By Proposition~\ref{prop:classif}, the subset $I$ is
inclusion-minimal if and only if the opposition involution on $I$ is
trivial. This is the case if $W_I$ is one of the irreducible
groups $A_1$, $B_n$, $D_{2n}$, $E_7$, $E_8$, $F_4$, $H_3$, $H_4$
and $I_2(2m)$ or a direct product of groups of these types.
Table~\ref{tab:opposition} lists the remaining irreducible spherical
groups $W_I$, the action of their opposition involution, and the
absolute length of their longest element, i.e. the value of
$r_{\ast}(I')=|I'|$ for an inclusion-minimal $I'\subsetneq I$.
The table further indicates the type of the corresponding parabolic subgroup
$W_{I'}$ of $W_I$ (see also \cite{Hum15}). If $W_I$ is a
direct product of spherical irreducible groups, an inclusion-minimal subset
$I'\subseteq I$ with $w_{I'}$ conjugate to $w_I$ is found by considering each
direct factor separately.
\begin{table}[ht]
\centering
\begin{tabular}{cccc}
\hline
$W_I$ & opposition involution & $\ell_T(w_I)$ & $W_{I'}$ \\
\hline
$A_{2n}$ ($n\geq 1$) & $i\mapsto 2n+1-i$ & $n$
& $A_1^n$ \\
$A_{2n-1}$ ($n\geq 2$) & $i\mapsto 2n-i$ & $n$
& $A_1^n$ \\
$D_{2n+1}$ ($n\geq 2$) & $(2n\;\;2n+1)$ & $2n$ & $D_{2n}$ \\
$E_6$  & $(1\;\;6)(3\;\;5)$ & $4$ & $D_4$ \\
$I_2(2m+1)$ ($m\geq 1$) & $(1\;\;2)$ & $1$ & $A_1$ \\
\hline
\end{tabular}
\vspace{4pt}
\caption{The irreducible spherical Coxeter systems with nontrivial
opposition involutions, using Bourbaki vertex numbering.}
\label{tab:opposition}
\end{table}

In the twisted case, a $\ast$-stable subset $I\subseteq S$ is
inclusion-minimal if and only if $\ast$ stabilizes every irreducible component
of $I$ and its restriction to each component coincides with the
opposition involution on that component. In particular, if $\ast$ interchanges
two irreducible components of the graph of $I$, then $I$ is not
inclusion-minimal.

Suppose that $I\subseteq S$ is $\ast$-stable, inclusion-minimal
and admits an elementary move: $J=\omega_K(I)$ with
$K:=I\cup J$, and $T:=J\setminus I$ a single $\ast$-orbit.
Since $I$ is inclusion-minimal, $\ast$ acting on $I$ coincides with
the opposition involution on $I$ and therefore stabilizes each irreducible
component of $I$. Hence
the $\ast$-orbit $\omega_K(T)\subseteq I$ is contained in an
irreducible component of $I$ and therefore also in an
irreducible component $K_0$ of $K$. The opposition involution
on $K_0$ interchanges $T$ and $\omega_{K_0}(T)$ and is
therefore both nontrivial and, since $T$ is a $\ast$-orbit,
different from the permutation
on $K_0$ induced by $\ast$. Inspection of the irreducible
Coxeter graphs whose opposition involution is nontrivial now shows
that $\ast$ is trivial on $K_0$. Hence $T$ consists of a single
element $t=t^{\ast}$, which is not fixed by the opposition involution
on $K_0$. Moreover, $K_0\setminus \{t\}$ is an
inclusion-minimal subset on which $\ast$ acts trivially and
coincides with the opposition involution on $K_0\setminus \{t\}$.
This rules out $A_n$ ($n\ge 3$), $D_{2n+1}$ and $E_6$ as possible
types of $K_0$. The irreducible component $K_0$ is therefore of
type $A_2$ or $I_2(2m+1)$, i.e. its only vertices are $t\notin I$ and
$s=\omega_{K_0}(t)\in I$. Moreover, $s$ and $t$ are both
nonadjacent to $I\setminus\{s\}$, the elementary move swaps
$s$ and $t$, and $m_{st}$ is odd ($K_0=A_2$
corresponding to $m_{st}=3$ and $K_0=I_2(2m+1)$ corresponding to
$m_{st}=2m+1$ for $m\ge 2$).

These considerations prove the forward implication in the following
proposition.
\begin{proposition}
Let $(W,S)$ be a Coxeter system of finite rank defined by a Coxeter
matrix $(m_{st})_{s,t\in S}$ and let $\ast$ be an
involutive automorphism of $W$ preserving $S$.
Then two $\ast$-stable inclusion-minimal subsets $I,J\subseteq S$ are
related by an elementary move precisely when $I\setminus J=\{s\}$ and
$J\setminus I=\{t\}$ with both $s$ and $t$ nonadjacent to $I\cap J$
and $m_{st}$ odd.
\end{proposition}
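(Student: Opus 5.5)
The forward implication has been established in the discussion preceding the statement. If $J=\omega_K(I)$ is an elementary move between $\ast$-stable inclusion-minimal subsets, then the irreducible component $K_0$ of $K=I\cup J$ containing $T=J\setminus I$ is of type $A_2$ or $I_2(2m+1)$. Moreover $K_0=\{s,t\}$ with $s\in I$, $t=t^{\ast}\notin I$ and $s,t$ nonadjacent to $I\setminus\{s\}$, and the move swaps $s$ and $t$. It remains to record that this yields $I\setminus J=\{s\}$ and $J\setminus I=\{t\}$. This holds because $\omega_K$ acts trivially on the components of $K$ other than $K_0$: these are components of $I$ on which $\omega_K$ restricts to $\omega$ of that component, and this equals $\ast$, which stabilizes them. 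Indeed $\omega_K$ maps $I\setminus\{s\}$ into itself and $s\mapsto t$. Finally, $I\cap J=I\setminus\{s\}$.

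For the converse, I will assume $I,J$ are $\ast$-stable inclusion-minimal with $I\setminus J=\{s\}$, $J\setminus I=\{t\}$, with $s,t$ nonadjacent to $I\cap J$ and $m_{st}$ odd. Put $K=I\cup J=(I\cap J)\sqcup\{s,t\}$. The first step is to show that $s=s^{\ast}$ and $t=t^{\ast}$. This is immediate since $I$ and $I\cap J=J\setminus\{t\}$ are both $\ast$-stable, so $\{s\}=I\setminus(I\cap J)$ is a $\ast$-stable singleton, and similarly for $t$. In particular $T=J\setminus I=\{t\}$ is a single $\ast$-orbit.

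The second step is to show that $K$ is spherical. Since $s,t$ are nonadjacent to $I\cap J$, the Coxeter graph of $K$ is the disjoint union of the graph of $I\cap J$ and the edge $\{s,t\}$ labelled $m_{st}<\infty$. Thus $W_K\cong W_{I\cap J}\times I_2(m_{st})$, and $I\cap J\subseteq I$ is spherical because $I$ is spherical by Proposition~\ref{prop:classif}\ref{it:class-minimal}. Hence $K$ is spherical.

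The third step is to compute $\omega_K(I)$. Opposition on $K$ is determined componentwise. On $\{s,t\}$ it is the opposition of $I_2(m_{st})$ with $m_{st}$ odd (type $A_2$ if $m_{st}=3$), which swaps $s$ and $t$ by Table~\ref{tab:opposition}. On each component $C$ of $I\cap J$, which is also a component of $I$ since $s$ is isolated from $I\cap J$, the opposition $\omega_C$ is a permutation of $C$. Therefore $\omega_K(I)=(I\cap J)\cup\{t\}=J$, so the transition from $I$ to $J$ is an elementary move.

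The main obstacle is only bookkeeping: making sure that the definition of an elementary move requires nothing beyond $J=\omega_K(I)$ and $J\setminus I$ being a single $\ast$-orbit. Inclusion-minimality of $J$ is assumed, and in any case it also follows from Proposition~\ref{prop:classif}\ref{it:class-moves}.
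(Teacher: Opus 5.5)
Your proof is correct and follows the paper's argument. The forward direction is taken from the preceding discussion. For the converse you do what the paper does: observe that $\{t\}=J\setminus I$ is a $\ast$-stable singleton, note that $K=(I\cap J)\sqcup\{s,t\}$ is spherical as a nonadjacent union of the spherical set $I\cap J$ and the dihedral set $\{s,t\}$ with $m_{st}$ odd, and compute $\omega_K(I)=\omega_{I\cap J}(I\cap J)\cup\omega_{\{s,t\}}(\{s\})=J$ componentwise. Your extra bookkeeping in the forward direction, deriving $I\setminus J=\{s\}$ and $I\cap J=I\setminus\{s\}$, only makes explicit what the paper leaves implicit.
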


\begin{proof}
Only the converse remains to be shown. Since $I$ and $J$ are $\ast$-stable,
$\{t\}=J\setminus I$ is also $\ast$-stable and thus a single $\ast$-orbit.
Put $I'=I\cap J$.
Since $m_{st}$ is odd, $W_{\{s,t\}}$ is of type $A_2$ or $I_2(2m+1)$.
Therefore $K=I'\cup\{s,t\}$
is spherical because $I'$ and $\{s,t\}$ are nonadjacent and both spherical.
Moreover,
$\omega_K(I)=\omega_{I'}(I')\cup\omega_{\{s,t\}}(\{s\})=I'\cup\{t\}=J$.
\end{proof}

\begin{example}
We consider the indefinite group $W$ used in the second example
of \cite[Section~5.1]{DeMan99}, with Coxeter graph
\begin{center}
\begin{tikzpicture}[
    x=0.9cm,
    y=0.9cm,
    every node/.style={
        circle,
        fill=black,
        inner sep=1.2pt
    }
]
\node (1) at (-1,0) {};
\node (2) at (0,0) {};
\node (3) at (1,-0.5) {};
\node (4) at (1, 0.5) {};

\draw (1) -- (2) -- (3) -- (4) -- (2);

\node[fill=none, below=3pt] at (1) {$1$};
\node[fill=none, below=3pt] at (2) {$2$};
\node[fill=none, right=3pt] at (3) {$3$};
\node[fill=none, right=3pt] at (4) {$4$};
\end{tikzpicture}
\end{center}
With $\ast=\id$, there are three conjugacy classes of involutions:
the conjugacy class of the identity element, with $\varnothing$ as the
inclusion-minimal set, one conjugacy class of reflections, with $\{1\}$,
$\{2\}$, $\{3\}$ and $\{4\}$ as its inclusion-minimal sets, and one conjugacy
class of involutions of absolute length 2, with $\{1,3\}$ and $\{1,4\}$ as
its inclusion-minimal sets. We let $\cc$ denote the latter class.
Both $\{1,3\}$ and $\{1,4\}$ are of type $A_1\times A_1$.
The other spherical subsets relevant for $\cc$ are $\{1,2,3\}$ and $\{1,2,4\}$
of type $A_3$ and $\{1,3,4\}$ of type $A_1\times A_2$. All three have two
orbits under their respective opposition involutions, which means that
their longest elements have absolute length $2$ and are in $\cc$.
Writing $[n]_q:=(1-q^n)/(1-q)$, we have $(A_1\times A_1)(q)=[2]_q^2$,
$(A_1\times A_2)(q)=[2]_q^2[3]_q$ and $A_3(q)=[2]_q[3]_q[4]_q$. The lengths
of the longest elements of $A_1\times A_1$, $A_1\times A_2$ and $A_3$ are
$2$, $4$ and $6$. Thus $\cc_{13}(q)=\cc_{14}(q)=q^2$. We further have
\begin{align*}
\cc_{123}(q)+q^{12}\cc_{123}(-q^{-1})&=A_3(q^2)\frac{\cc_{13}(q)}{(A_1\times A_1)(q^2)}
 =\frac{[2]_{q^2}[3]_{q^2}[4]_{q^2}q^2}{[2]_{q^2}^2}\\
&=q^2+q^4+2q^6+q^8+q^{10}.
\end{align*}
and
\begin{align*}
\cc_{134}(q)+q^8\cc_{134}(-q^{-1})&=2(A1\times A_2)(q^2)\frac{\cc_{13}(q)}{(A_1\times A_1)(q^2)}
=2[3]_{q^2}q^2\\
&=2q^2+2q^4+2q^6.
\end{align*}
Hence $\cc_{123}(q)=\cc_{124}(q)=q^2+q^4+q^6$ and $\cc_{134}(q)=2q^2+q^4$. Omitting the
remaining calculations, we further find
$$
W(q)=\frac{(1+q)(1+q^2)(1+q+q^2)}{(1-q)(1-q-q^3)}
$$
and, by applying Corollary~\ref{cor:steinberg},
$$
\frac{\cc(q)}{W(q^2)}=\frac{q^2(1-q^2)(2+q^2+2q^4)}{(1+q^2)(1+q^4)(1+q^2+q^4)}.
$$
Thus
\begin{align*}
\cc(q)&=\frac{2q^2+q^4+2q^6}{1-q^2-q^6}\\
&=2q^2+3q^4+5q^6+7q^8+10q^{10}+15q^{12}+22q^{14}+32q^{16}+\dots.
\end{align*}
\end{example}

\section*{Acknowledgement}
The author acknowledges the use of GPT-5.6 in the investigations
leading to this work and in proofreading the manuscript, for whose
content the author bears sole responsibility.

\end{document}